\newcommand{\I}{\mbox{${\mathbb I}$}}
\newcommand{\R}{\mbox{${\mathbb R}$}}
\newcommand{\tr}{{\rm tr}}
\newcommand{\ric}{{\rm Ric}}
\def\numberwithin#1#2{\@ifundefined{c@#1}{\@nocnterrr}{%
  \@ifundefined{c@#2}{\@nocnterr}{%
  \@addtoreset{#1}{#2}%
  \toks@\expandafter\expandafter\expandafter{\csname the#1\endcsname}%
  \expandafter\xdef\csname the#1\endcsname
    {\expandafter\noexpand\csname the#2\endcsname
     .\the\toks@}}}}
\numberwithin{equation}{section}
\newtheorem{thm}[equation]{Theorem}
\newtheorem{lemma}[equation]{Lemma}
\newtheorem{prop}[equation]{Proposition}
\newtheorem{cor}[equation]{Corollary}
\newtheorem{ex}[equation]{Example}
\newtheorem{rem}[equation]{Remark}
\newenvironment{rmk}{\begin{rem} \em}{\end{rem}}
\begin{document}

\title{Some New Examples of Non-K\"ahler Ricci Solitons}
\author{Andrew S. Dancer}
\address{Jesus College, Oxford University, OX1 3DW, United Kingdom}
\email{dancer@maths.ox.ac.uk}
\author{McKenzie Y. Wang}
\address{Department of Mathematics and Statistics, McMaster
University, Hamilton, Ontario, L8S 4K1, Canada}
\email{wang@mcmaster.ca}
\thanks{partially supported by NSERC Grant No. OPG0009421}

\date{revised \today}

\begin{abstract}
We produce non-K\"ahler complete steady gradient Ricci solitons
generalising those constructed by Bryant and Ivey.
\end{abstract}

\maketitle

\noindent{{\it Mathematics Subject Classification} (2000):
53C25, 53C44}

\bigskip
\setcounter{section}{-1}

\section{\bf Introduction}

In this article we continue our investigation of reductions of the Ricci soliton
equations to ordinary differential equations. Recall that a Ricci soliton consists of
a complete Riemannian metric $g$ and a complete vector field $X$ on a manifold
satisfying the equation:
\begin{equation} \label{LieRS}
{\ric}(g) + \frac{1}{2} \,{\sf L}_{X} g + \frac{\epsilon}{2} \, g = 0
\end{equation}
where $\epsilon$ is a real constant and $\sf L$ denotes the Lie derivative.

This equation is a generalisation of the Einstein equation, and it is
natural to look for solutions by methods that have been fruitful in the Einstein
case. In \cite{DW} we set up the formalism for cohomogeneity one Ricci solitons
and wrote down the resulting ODE system. We found families of explicit K\"ahler
solutions generalising those of \cite{Ko}, \cite{Ca}, \cite{ChV},
\cite{G}, \cite{PTV}, \cite{FIK}, \cite{PS} and \cite{ACGT}.

Most of the known examples of Ricci solitons are indeed K\"ahler. The
exceptions of which we are aware are the homogeneous solitons on nilpotent
Lie groups \cite{La}, the rotationally symmetric Bryant
solitons \cite{Bry} on $\R^n \, (n > 2),$   Ivey's generalization of these
solutions \cite{Iv}, as well as the expanding counterparts described in \cite{Cetc}
and \cite{GK}. (Note that if $n=2$ the Bryant soliton is Hamilton's
famous cigar soliton \cite{Ha1}, which {\em is}  K\"ahler.) The Bryant solutions
are warped products on a single factor, while those of Ivey involve two factors.

In this paper we shall focus on steady gradient Ricci solitons and
generalise the Bryant-Ivey examples to produce complete steady  solitons
on warped products over an arbitrary number of positive Einstein factors
(see Theorem \ref{mainthm}). An important tool in our analysis is the
observation that the {\em general} cohomogeneity one {\em steady} soliton equations
always admit a Lyapunov function. This generalises the Lyapunov function in
the Bryant-Ivey systems.

Recall that Hamilton has proved that steady gradient Ricci solitons occur as
type II singularity models for the Ricci flow when the curvature operator is
non-negative and the Ricci is positive (\cite{Ha2} or Theorem 4.3.6 in \cite{CaZ}).
Our new steady solitons have non-negative Ricci curvature (Theorem \ref{ricci})
and always some negative sectional curvatures. They have asymptotically
paraboloid geometry (Theorem \ref{asymptotics}) and hence zero asymptotic
volume ratio. The asymptotic scalar curvature ratio is infinite
(cf Remark \ref{sectcurv}). Whether these steady solitons can be realised
as blow-up limits of non-trivial Ricci flows seems to be an interesting
question.

Finally, we would like to thank the referee for several helpful comments.

\section{\bf Lyapunov functions}

We recall the set-up from \cite{DW}. We consider a manifold $M$
with an open dense set foliated by diffeomorphic hypersurfaces $P_t$ of
real dimension $n$.
Assume the metric can be written as $g=dt^2 + g_t$ where $g_t$ is a metric
on $P_t$. We can view $t$ as arclength along a geodesic orthogonal to the
hypersurfaces. Let $r_t$ denote the Ricci tensor of $g_t$, viewed as an
endomorphism via $g_t$, and let $L_t$ denote the shape operator of the
hypersurfaces (so $\dot{g_t} = 2 g_t L_t$). Assume that the scalar curvature
$R_t=\tr(r_t)$ and the mean curvature $\tr(L_t)$ are constant on each
hypersurface. Furthermore, assume that the codifferentials $\delta^{\nabla^{t}} L_t$
vanish, where $L_t$ is viewed as a $TP_t$-valued $1$-form on $P_t$.

The above assumptions are satisfied, for example, if $M$ is
of cohomogeneity one with respect to an isometric group action, with
no repeated summands in the isotropy representation of the principal orbits $P_t$.
They are satisfied also when $M$ is a multiple warped product over an
interval, which will be the setting of this paper.

We consider solitons of {\em gradient type}, i.e., where $X = {\rm grad} \; u$
for a function $u$. Equation (\ref{LieRS}) then becomes
\begin{equation} \label{gradRS}
{\ric}(g) + {\rm Hess}(u) + \frac{\epsilon}{2} \, g = 0.
\end{equation}
We will further suppose that $u$ is a function of $t$ only. In this setting,
the above equation become the system (cf \S 1 of \cite{DW})
\begin{eqnarray}
  -{\rm tr} (\dot{L}) - {\rm tr} (L^2) + \ddot{u} + \frac{\epsilon}{2} &=& 0,  \label{TT} \\
    r_t - ({\rm tr}\, L)L - \dot{L} + \dot{u} L + \frac{\epsilon}{2} \, \I &=& 0 \label{SS}.
\end{eqnarray}

We have a conservation law
\[
\ddot{u} + ({\rm tr}\, L) \dot{u} - (\dot{u})^2 -\epsilon u = C
\]
for some constant $C$.
Using the equations this may be rewritten as
\begin{equation}
{\rm tr} (r_t) + {\rm tr} (L^2) - (\dot{u} - {\rm tr}\, L )^2 - \epsilon u +
\frac{1}{2}(n-1) \epsilon = C.
\end{equation}

We now specialise to the case of {\em steady solitons}, that is, $\epsilon =0$.
The conservation law is now
\begin{equation} \label{cons1}
{\rm tr} (r_t) + {\rm tr} (L^2) - (\dot{u} - {\rm tr}\,  L)^2 = C.
\end{equation}

\begin{prop}
The function $(\dot{u} - {\rm tr}\, L)^{-2}$
is a Lyapunov function, that is, it is monotonic on each interval on which it is defined.
\end{prop}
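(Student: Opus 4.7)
My plan is to set $f := \dot u - \tr\, L$ and show directly that $f$ is non-decreasing in $t$. Monotonicity of $f^{-2}$ on each connected component of its domain $\{t : f(t) \neq 0\}$ will then follow by a simple sign check.

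First I would differentiate: $\dot f = \ddot u - \tr(\dot L)$. Crucially, the right-hand side is exactly the combination appearing in the trace equation (\ref{TT}) of the steady soliton system. Setting $\epsilon = 0$ there and rearranging gives
\[
\dot f \;=\; \ddot u - \tr(\dot L) \;=\; \tr(L^2) \;\geq\; 0,
\]
with equality only at points where the shape operator vanishes identically on the hypersurface.

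To pass from monotonicity of $f$ to monotonicity of $f^{-2}$, I compute
\[
\frac{d}{dt}\bigl(f^{-2}\bigr) \;=\; -\frac{2 \dot f}{f^3} \;=\; -\frac{2\, \tr(L^2)}{(\dot u - \tr\, L)^3}.
\]
On each connected component of the open set $\{f \neq 0\}$, which is precisely the domain on which $f^{-2}$ is defined, the continuous function $f$ has constant sign; hence $f^3$ has constant sign, and so does the right-hand side above. This yields the asserted monotonicity.

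I do not expect any real obstacle. The argument is essentially a one-line consequence of equation (\ref{TT}) with $\epsilon = 0$; the only step requiring attention is the routine sign bookkeeping, which forces one to restrict to a connected component of the complement of $\{f = 0\}$ so that $f^3$ has a well-defined sign.
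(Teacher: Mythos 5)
Your proposal is correct and is essentially the paper's own proof: both compute $\frac{d}{dt}\bigl((\dot u - \tr\, L)^{-2}\bigr) = -2(\ddot u - \tr\,\dot L)/(\dot u - \tr\, L)^{3}$ and then use the steady-case trace equation (\ref{TT}) to replace $\ddot u - \tr\,\dot L$ by $\tr(L^2) \geq 0$. The only difference is presentational — you spell out the sign bookkeeping on connected components of $\{\dot u - \tr\, L \neq 0\}$, which the paper leaves implicit.
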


\begin{proof}
\begin{eqnarray*}
\frac{d}{dt} \left( \frac{1}{(\dot{u} - {\rm tr}\, L)^2} \right) &=&
-\frac{2 ( \ddot{u} - {\rm tr}\, \dot{L} )}{(\dot{u} - {\rm tr}\, L)^3} \\
&=& - \frac{2 {\rm tr} (L^2) }{(\dot{u} - {\rm tr}\, L)^3}
\end{eqnarray*}
\end{proof}

\begin{rmk}
The conservation law (\ref{cons1}) shows that our Lyapunov function is a constant multiple
of
\[
\frac{{\rm tr}(r_t) + {\rm tr} (L^2)}{(\dot{u} - {\rm tr}\; L)^2}  -1.
\]
\end{rmk}

\begin{rmk} Recall that the above conservation law was derived in \cite{DW}
from the consequence $\Delta du = du \circ \ric$ of the Ricci soliton
equation upon application of the contracted second Bianchi identity.
In fact, we can also derive this conservation law from Perelman's
$\mathcal F$-functional (steady case) and $\mathcal W$-functional
(expanding and contracting cases). From this point of view, the conservation
law asserts that the trajectories representing smooth gradient Ricci
solitons must lie in the zero-energy hypersurface of the Hamiltonians
corresponding to Perelman's functionals. Details of the Hamiltonian
formulation will be discussed elsewhere.
\end{rmk}

\section{\bf Multiple warped products}
We now specialise to the class of examples, multiple warped products,
that will generalise the examples of Bryant and Ivey.

We look for metrics of the form
\begin{equation} \label{metric}
dt^2 + \sum_{i=1}^{r} g_i^2(t)\,  h_i
\end{equation}
on $I \times M_1 \times ... \times M_r$\, where
$I$ is an interval in $\mathbb R$ and $(M_i, h_i)$ are Einstein manifolds
with positive Einstein constants $\lambda_i$. We let $d_i$ denote the (real)
dimension of $M_i$.

Recall that the soliton potential $u$ is taken to be a function of $t$ alone.
The resulting equations are equivalent to those coming from a cohomogeneity one
ansatz, though of course the $M_i$ could be inhomogeneous.

The shape operator and Ricci endomorphism are now given by
\begin{eqnarray*}
L_t &=& {\rm diag} \left( \frac{\dot{g_1}}{g_1} \,\I_{d_1}, \cdots,\frac{\dot{g_r}}{g_r} \,\I_{d_r}\right)  \\
r_t &=& {\rm diag} \left( \frac{\lambda_1}{g_1^2} \, \I_{d_1}, \cdots,\frac{\lambda_r}{g_r^2}\, \I_{d_r}\right)
\end{eqnarray*}
where $\I_m$ denotes the identity matrix of size $m$. (We shall henceforth
drop the subscript $t$ of $L$ for ease of notation.)
Motivated by Ivey's work, we introduce new variables
\begin{eqnarray}
X_i &=& \frac{\sqrt{d_i}}{( -\dot{u} + {\tr} L)} \frac{\dot{g_i}}{g_i} \label{def-Xi}\\
Y_i &=& \frac{\sqrt{d_i \lambda_i}}{g_i} \frac{1}{(-\dot{u} + \tr{L})} \label{def-Yi}
\end{eqnarray}
for $i=1, \ldots, r$.

Notice that
\[
\sum_{j=1}^{r} \, X_j^2  = \frac{{\rm tr} (L^2)}{(\dot{u} - {\rm tr} L)^2} \;\; : \;\;
\sum_{j=1}^{r} \, Y_j^2 = \frac{ {\rm tr} (r_t)}{(\dot{u} - {\rm tr} L)^2}.
\]
We can take our Lyapunov function, therefore, to be
\begin{equation} \label{Lyap}
{\mathcal L} := \frac{C}{(\dot{u}- {\rm tr} L)^2} = \sum_{i=1}^{r} \, (X_i^2 + Y_i^2)  - 1
\end{equation}
where $C$ is a nonzero constant to be specified later. Note further that
\begin{equation} \label{cons2}
{\mathcal L} = \tilde{C}_{i} g_i^2 Y_i^2 \;\; : \;\; i=1, \ldots, r
\end{equation}
for nonzero constants $\tilde{C}_{i} = \frac{C}{d_i \lambda_i}$.

It is convenient to introduce a new independent variable $s$ defined by
\begin{equation} \label{st}
\frac{d}{ds} := \frac{1}{(-\dot{u} + {\rm tr} L)} \frac{d}{dt} =
\sqrt{\frac{\mathcal L}{C}} \frac{d}{dt}
=\frac{g_i Y_i}{\sqrt{\lambda_i d_i}}\, \frac{d}{dt},
\end{equation}
in which the final expression is independent of $i$ by (\ref{cons2}).
We use a prime ${ }^{\prime}$ to denote differentiation with respect to $s$.

We obtain from the Ricci soliton system the following equations in our new variables:
\begin{eqnarray}
X_{i}^{\prime} &=& X_i \left( \sum_{j=1}^{r} X_j^2 -1 \right) +
\frac{Y_i^2}{\sqrt{d_i}} \, , \label{eqnX} \\
Y_{i}^{\prime} &=&  Y_i \left( \sum_{j=1}^{r} X_j^2 -\frac{X_i}{\sqrt{d_i}}
\right) \label{eqnY}
\end{eqnarray}
for $i=1, \ldots, r$.   Note that these imply the equation
\begin{equation} \label{eqnLyap}
{\mathcal L}^{\prime} = 2 {\mathcal L} \left( \sum_{i=1}^{r} X_i^2 \right).
\end{equation}

Conversely, if we have a solution of the above system, we may recover $t$ and the $g_i$
from
\begin{equation} \label{def-tgi}
dt = \sqrt{\frac{\mathcal L}{C}}\,\, ds, \ \ \ \ \
{g}_i = \sqrt{\frac{\mathcal L}{C}} \, \frac{\sqrt{d_i \lambda_i}}{Y_i}\, ,
\end{equation}
which are equivalent to (\ref{st}) and (\ref{def-Yi}) respectively. The soliton potential
is recovered from integrating
\begin{equation} \label{def-u}
 \dot{u} = \tr(L) - \sqrt{\frac{C}{\mathcal L}},
\end{equation}
where $\tr(L)$ is calculated using
\begin{equation} \label{logdiff-gi}
 \frac{\dot{g_i}}{g_i} = \sqrt{\frac{C}{\mathcal L}} \frac{X_i}{\sqrt{d_i}}\, .
\end{equation}
Differentiating the above, which is equivalent to (\ref{def-Xi}), one gets
\begin{equation} \label{gidotdot}
     \frac{\ddot{g}_i}{g_i} = \frac{C}{\mathcal L} \left(\frac{X_i^2 + Y_i^2 -\sqrt{d_i} X_i}{d_i}\right).
\end{equation}
Putting (\ref{def-u})-(\ref{gidotdot}) together one obtains (\ref{SS}). Finally,
differentiating (\ref{def-u}) gives (\ref{TT}). We then obtain a smooth solution of the
Ricci soliton equation provided that appropriate smoothness conditions
at the endpoints (formulated in the next section) are satisfied.

\begin{rmk} \label{einstein}
In (\ref{def-u}) if we take the derivative with respect to $s$ instead, we have
$$ u^{\prime} = \frac{\tr L}{\sqrt{C/{\mathcal L}} } - 1 = \sum_{i=1}^{r} \sqrt{d_i} X_i - 1,$$
where we have used (\ref{logdiff-gi}). Observe that $u^{\prime} \equiv 0$ iff
the soliton is trivial (Ricci-flat in the steady case). This motivates the
definition of the quantity $\mathcal{H} := \sum_{i=1}^{r} \sqrt{d_i} X_i$,
so that the Ricci-flat trajectories lie in the subspace ${\mathcal H} = 1$.
For these trajectories the conservation law (\ref{cons1})
becomes $\mathcal{L} = 0$.
\end{rmk}

\section{\bf Trajectories of the equations}

Recall that by applying the maximum principle to consequences of
the Ricci soliton equation one can show that on a closed manifold
a steady Ricci soliton is Ricci-flat (see e.g. Proposition 1.66 in \cite{Cetc}).
Hence we are interested in constructing complete non-compact steady
soliton metrics where there is a smooth collapse at one end, corresponding to $t=0$
without any loss of generality, onto a lower-dimensional submanifold. This can
be achieved if we take one factor, say $M_1$, to be a sphere $S^{d_1}$.
The submanifold would then be $M_2 \times \cdots \times M_r$. With
the normalization $\lambda_1 = d_1 -1$, the boundary conditions for the soliton
solution to be $C^2$ are the existence of the following limits:
\begin{equation} \label{bdy0}
g_1(0)=0   \;\; : \;\; g_i(0) = l_i \neq 0 \; (i > 1),
\end{equation}

\begin{equation} \label{bdy1}
\dot{g_1}(0)=1 \;\; : \;\; \dot{g_i}(0) = 0 \; (i > 1),
\end{equation}

\begin{equation} \label{bdy2}
\ddot{g}_{1}(0) =0 \;\; : \;\; \ddot{g_i}(0) \;\, {\rm  finite} \;
(i > 1),
\end{equation}

\begin{equation} \label{bdyu}
u(0) \, \,  {\rm finite}: \, \, \dot{u}(0)=0 \, \, : \ddot{u}(0) \, \, {\rm \, finite}.
\end{equation}

In order to get a smooth solution, it suffices to show further that the third
derivatives of $g_i$  tend to finite limits at $t=0$. Once this is done,
we can write the trace of the soliton equation (\ref{gradRS}) as
$$ \Delta u =  R + \frac{n \epsilon}{2}.$$
The right-hand side of this elliptic equation for $u$ lies in $C^{0, \alpha}$.
Since $u$ is in $C^2$, it follows from Lemma 6.16 in \cite{GT} that $u$
is in $C^{2, \alpha}$. Using the contracted second Bianchi identity and the
weak form of Bochner's formula for the Laplacian of a one-form (verified for
example by smooth approximation in $W^{1, 2}$) we can then show that the $1$-form
$\omega:=du$ is a weak solution of $\Delta \omega = 2 \omega \circ r$
(Eq. (2.1) in \cite{DW}). The argument in the proof of Lemma 2.2
there shows that $\omega$ is actually in $C^{2, \alpha}$. The smoothness
(in fact real analyticity) of the solution then follows from Morrey's theorem.

With the above remarks in mind, we consider trajectories emanating from the
critical point of (\ref{eqnX}) and (\ref{eqnY}) given by
\[
X_i = \beta, \;\; Y_1 = \hat{\beta}, \;\; X_i =Y_i=0  \,\, (i > 1)
\]
where $\beta = \frac{1}{\sqrt{d_1}} $ and $\hat{\beta} = +\sqrt{1- \beta^2}$.
This critical point lies on the unit sphere in $XY$-space, i.e., on the
level set  ${\mathcal L}=0$.

Linearising about this critical point gives a system whose matrix
has a $2 \times 2$ block
\[
\left( \begin{array}{cc}
3 \beta^2 -1 & 2 \beta \hat{\beta} \\
\beta \hat{\beta} & 0
\end{array} \right)
\]
corresponding to $X_1, Y_1$ : the remaining entries are diagonal, with
$\beta^2$ and $\beta^2 -1$ each occurring $r-1$ times.
The eigenvalues are therefore
$\beta^2$ ($r-1$ times),  $\beta^2 -1$ ($r$ times), and $2 \beta^2$.

\medskip

We shall assume from now on that $d_1 > 1$. The above critical point
is then {\em hyperbolic}.

\medskip

We will parametrise trajectories emanating from this critical point
so that the critical point corresponds to $s=-\infty$. Note that there is
some $\delta > 0$  such that for $i>1$, the differential inequality
$$  \frac{d}{ds} Y_i^2 \leq 2(\beta^2 + \delta) Y_i^2$$
holds near $s=-\infty$ for any such trajectory. A comparison argument then
shows that if $Y_i(s_*) > 0$ for all $i>1$, then on $(-\infty, s_*]$ we have
$Y_i > 0$. Since $\lim_{s \rightarrow -\infty} Y_1 = \hat{\beta} > 0$ we may
assume that $Y_1 > 0$ on $(-\infty, s_*]$ as well.

Now by standard facts in dynamical systems (cf \cite{CL}, proofs of
Theorems 4.1, 4.3 and 4.5) and the fact that the system
(\ref{eqnX})-(\ref{eqnY}) is invariant under the symmetries
$Y_i \mapsto -Y_i$, there is an $(r-1)$-parameter family of trajectories
lying in the unstable manifold of this critical point having the above
positivity properties and flowing into the open unit ball ${\mathcal L} < 0$.

\begin{rmk} \label{Bohm}
In \cite{Bo} an $r-2$ parameter family of complete Ricci-flat
metrics was constructed on the manifolds under consideration here.
These correspond to trajectories emanating from the above critical point and
lying {\em in} the sphere ${\mathcal L} = 0$. In fact, the unstable manifold
intersects this sphere transversely, and this accounts for the parameters
in the Ricci-flat metrics.
\end{rmk}

We now work with one of the trajectories going into ${\mathcal L} < 0$.

 Eq. (\ref{eqnLyap}) shows
that the trajectory {\em stays} in the region ${\mathcal L} < 0$. Hence all
the variables are bounded by $1$ and the flow exists for all $s \in \mathbb R$.
Moreover, $\mathcal L$ decreases monotonically to some negative constant $\kappa$.
Note that as a result of the above choices, the constant $C$ in (\ref{Lyap})
is fixed and is {\em negative}.

Now  Eq. (\ref{eqnY}) and $X_i \leq 1$ imply that $Y_i^2$ satisfies
the differential inequality
$$ \frac{d}{ds} Y_i^2 \geq -\frac{2}{\sqrt{d_i}} \, Y_i^2. $$
By a standard comparison argument it follows from $Y_i(s_*) > 0$ that
$Y_i > 0$ on $[s_*, +\infty)$. Hence for all finite $s$, $g_i$ can be defined
by (\ref{def-tgi}) (so is nonzero) and (\ref{cons2}) holds.

\begin{lemma}
The metric corresponding to our trajectory is complete at $s = +\infty$.
\end{lemma}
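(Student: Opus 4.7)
The plan is to show directly that $t\to\infty$ as $s\to\infty$, via the relation $dt/ds=\sqrt{\mathcal{L}/C}$ from (\ref{def-tgi}). Since $t$ is arclength along a geodesic orthogonal to the hypersurfaces, completeness at the $s=+\infty$ end reduces to the statement that $t(s)$ is unbounded there.

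The key input is the behavior of the Lyapunov function $\mathcal{L}$ along the trajectory. From (\ref{eqnLyap}) we have $\mathcal{L}'=2\mathcal{L}\sum_i X_i^2$, and since the trajectory is constrained to the region $\mathcal{L}<0$ (noted just before the lemma), this forces $\mathcal{L}'\le 0$: $\mathcal{L}$ is non-increasing on the trajectory. On the other hand, $\mathcal{L}=\sum_i(X_i^2+Y_i^2)-1\ge -1$, so $\mathcal{L}$ is bounded below. Hence $\mathcal{L}(s)$ descends monotonically to a limit $\kappa$ as $s\to\infty$. Because $\mathcal{L}(s)\le\mathcal{L}(s_0)<0$ once the trajectory has entered the open unit ball, we in fact have $\kappa\le\mathcal{L}(s_0)<0$; in particular $\kappa$ is strictly negative.

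With this in hand, recall that $C<0$ was fixed. Thus $\mathcal{L}/C>0$ along the trajectory, and as $s\to\infty$,
\[
\sqrt{\frac{\mathcal{L}(s)}{C}}\longrightarrow\sqrt{\frac{\kappa}{C}}>0.
\]
In particular the integrand $\sqrt{\mathcal{L}/C}$ is bounded below by a positive constant for all sufficiently large $s$, so
\[
t(s)=t(s_0)+\int_{s_0}^{s}\sqrt{\frac{\mathcal{L}(\sigma)}{C}}\,d\sigma\longrightarrow +\infty
\]
as $s\to+\infty$, which is exactly completeness at this end.

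The only subtlety is confirming that $\kappa<0$ strictly, rather than $\kappa=0$, since the latter would leave open the possibility that $\sqrt{\mathcal{L}/C}\to 0$ too quickly and $t$ remains bounded. This is the potential obstacle, but it is handled immediately by monotonicity: $\mathcal{L}$ is non-increasing and already takes a strictly negative value at some finite $s_0$, so its limit cannot be $0$. No estimate on the rate of decay of $\mathcal{L}$ is needed.
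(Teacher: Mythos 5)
Your proof is correct and follows essentially the same route as the paper: the paper's own (very terse) argument likewise uses $dt=\sqrt{\mathcal{L}/C}\,ds$ together with the fact, established just before the lemma, that $\mathcal{L}$ decreases monotonically in $\mathcal{L}<0$ to a strictly negative limit, so the integrand is bounded below and $t\to+\infty$. The only point you omit is the paper's closing remark that the $g_i$ stay nonzero (so the metric is nondegenerate), but this was already proved in the discussion preceding the lemma via $Y_i>0$ on $[s_*,+\infty)$, so nothing essential is missing.
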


\begin{proof}
From (\ref{def-tgi}) it follows that as $s$ tends to infinity the arclength
$t$ does also. The definition of $g_i$ shows that $g_i$ remain nonzero
on $s > s_*$.
\end{proof}

We can refine our analysis to study the asymptotics of the metric as
$s$ tends to $+\infty$.

\begin{prop} \label{omegalimit}
The trajectory converges to the origin as $s$ tends to $+\infty$.
\end{prop}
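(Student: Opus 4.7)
The plan is a LaSalle-type invariance argument using $\mathcal{L}$ as Lyapunov function. First, since $\mathcal{L} < 0$ along the trajectory, equation (\ref{eqnLyap}) gives $\mathcal{L}' \le 0$, so $\mathcal{L}(s)$ is non-increasing; together with the pointwise bound $\mathcal{L} \ge -1$ and the fact that $\mathcal{L}$ is already strictly negative at some initial $s_0$, we obtain a limit $\mathcal{L}(s) \to \kappa$ for some $\kappa \in [-1, 0)$ as $s \to +\infty$. Second, the constraint $\mathcal{L} < 0$ confines the trajectory to the open unit ball of $XY$-space, so it is bounded; its $\omega$-limit set $\Omega$ is therefore a nonempty, compact, flow-invariant subset, on which $\mathcal{L} \equiv \kappa$ by continuity.

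The heart of the argument will be to show $\Omega = \{0\}$. I would pick any point $p \in \Omega$ and take the full orbit $\gamma$ through $p$ (which lies in $\Omega$ by invariance). The identity $\mathcal{L}(\gamma(s)) \equiv \kappa$ forces $\mathcal{L}'(\gamma(s)) \equiv 0$, and then (\ref{eqnLyap}) combined with $\kappa < 0$ yields $\sum_j X_j^2 \equiv 0$ along $\gamma$, so $X_j \equiv 0$ for every $j$. Substituting $X_j \equiv 0$ into the $X_i$-equation (\ref{eqnX}) gives $0 = X_i' = Y_i^2/\sqrt{d_i}$, whence $Y_i \equiv 0$ as well. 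Therefore $p = 0$, so $\Omega = \{0\}$; and a bounded trajectory whose $\omega$-limit set is a single point must converge to that point.

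The main obstacle --- the only genuinely non-bookkeeping step --- is the identification of the largest flow-invariant subset of $\{\mathcal{L}' = 0\} = \{X_1 = \cdots = X_r = 0\}$, which is the nontrivial content of the LaSalle principle. It goes through cleanly here because equation (\ref{eqnX}) reduces to $X_i' = Y_i^2/\sqrt{d_i}$ once the $X_j$'s vanish, so the $X_i$-equation alone kills the remaining $Y_i$'s without any need to invoke (\ref{eqnY}).
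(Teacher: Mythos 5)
Your proposal is correct and follows essentially the same route as the paper: both use the LaSalle-type argument that the $\omega$-limit set $\Omega$ is a nonempty compact flow-invariant set on which $\mathcal{L}\equiv\kappa<0$, then use (\ref{eqnLyap}) to force $X_i\equiv 0$ on $\Omega$ and (\ref{eqnX}) to force $Y_i\equiv 0$, so that $\Omega=\{0\}$. The only cosmetic difference is that you phrase the invariance step via the full orbit through a point of $\Omega$, while the paper argues pointwise that a nonzero derivative of $\mathcal{L}$ (or of some $X_i$) contradicts flow-invariance.
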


\begin{proof}
Recall that the $\omega$-limit set of the trajectory is the set
\[
\Omega = \{ (X^*,Y^*) : \exists \, s_n \rightarrow +\infty \; {\rm  with} \;
(X(s_n),Y(s_n)) \rightarrow (X^*,Y^*) \}.
\]
As our trajectory ultimately lies in a compact set, we know from standard theory
(\cite{Pk} \S 3.2) that $\Omega$ is a compact, connected, non-empty set that is
invariant under the flow of our equations. Moreover, $\Omega$ is contained in
the sphere ${\mathcal L}= \kappa$.

Now if $\Omega$ contains a point $(X^*, Y^*)$ with $X^* \neq 0$, we see
from (\ref{eqnLyap}) that ${\mathcal L}^{\prime}$
at this point is nonzero, contradicting the flow-invariance of $\Omega$.
Hence $\Omega$ is contained in the set $X_i=0 \; (i=1, \ldots, r)$.
 Furthermore, if
$\Omega$ contains a point $(0, Y^*)$ with $Y^* \neq 0$, we see from (\ref{eqnX}) that
some $X_i^{\prime}$ is nonzero, again contradicting flow-invariance.

Hence $\Omega = (0,0)$, and the limiting value $\kappa$ of $\mathcal L$ is
$-1$, showing that the trajectory does indeed converge to the origin.
\end{proof}

\begin{lemma}
We have $\lim_{s \rightarrow \infty} \frac{X_i}{Y_i^2} = \frac{1}{\sqrt{d_i}}$.
\end{lemma}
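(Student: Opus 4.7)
My plan is to reduce the statement to a scalar linear ODE for the quantity $v_i := X_i/Y_i^2 - 1/\sqrt{d_i}$ and then apply a standard integrating-factor argument. Since Proposition \ref{omegalimit} yields $(X,Y)\to 0$ as $s\to\infty$, both numerator and denominator tend to zero, so a direct estimate is needed. Crucially, $Y_i > 0$ on $[s_*,\infty)$ by the paragraph preceding the lemma, so $v_i$ is smooth on $[s_*,\infty)$.

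First I would differentiate the identity $v_i Y_i^2 = X_i - Y_i^2/\sqrt{d_i}$ and substitute (\ref{eqnX}) and (\ref{eqnY}). After cancelling the leading $-X_i + Y_i^2/\sqrt{d_i}$ against $-v_i Y_i^2$, the remaining algebra gives
\begin{equation*}
v_i' = -\alpha_i(s)\, v_i + f_i(s),
\end{equation*}
where
\begin{equation*}
\alpha_i(s) = 1 + \sum_{j=1}^{r} X_j^2 - \frac{2 X_i}{\sqrt{d_i}}, \qquad f_i(s) = \frac{1}{\sqrt{d_i}}\left(\frac{2X_i}{\sqrt{d_i}} - \sum_{j=1}^{r} X_j^2\right).
\end{equation*}
By Proposition \ref{omegalimit}, $X_j(s)\to 0$ for every $j$, hence $\alpha_i(s)\to 1$ and $f_i(s)\to 0$ as $s\to\infty$.

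The conclusion then follows from a standard asymptotic fact for scalar linear ODEs: setting $A(s):=\int_{s_*}^{s}\alpha_i(\tau)\,d\tau$, variation of parameters gives
\begin{equation*}
v_i(s) = v_i(s_*)\, e^{-A(s)} + \int_{s_*}^{s} e^{A(\tau)-A(s)} f_i(\tau)\,d\tau.
\end{equation*}
Since $\alpha_i\to 1$, we have $A(s)\to\infty$, so the first term vanishes. For the second, given $\epsilon>0$ choose $T$ large enough that $|f_i|<\epsilon$ and $\alpha_i>1/2$ on $[T,\infty)$; then $\int_T^{s} e^{A(\tau)-A(s)}\,d\tau \leq \int_T^{s} e^{-(s-\tau)/2}\,d\tau \leq 2$, while the piece of the integral over $[s_*,T]$ decays like $e^{-A(s)}$. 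Hence $\limsup_{s\to\infty}|v_i(s)| \leq 2\epsilon$, and letting $\epsilon\to 0$ gives $v_i(s)\to 0$, which is the desired limit.

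The only delicate step is the algebraic derivation of the ODE for $v_i$; every other ingredient is either already established or is a routine asymptotic estimate. Conceptually, the origin is a non-hyperbolic critical point at which $X_i$ acts as a fast variable slaved to the slow variable $Y_i^2$ through the balance $X_i \sim Y_i^2/\sqrt{d_i}$ inherent in (\ref{eqnX}); the lemma is just the precise statement of this slaving at $s=+\infty$.
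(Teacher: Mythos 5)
Your proposal is correct, and its algebraic core is identical to the paper's: the linear ODE you derive for $v_i = X_i/Y_i^2 - 1/\sqrt{d_i}$ is exactly the paper's equation (\ref{eqnXY2}) shifted by the constant $1/\sqrt{d_i}$ (substituting (\ref{eqnX})--(\ref{eqnY}) into $(X_i/Y_i^2)^{\prime}$ gives coefficient $-1-\sum_{j}X_j^2+\frac{2X_i}{\sqrt{d_i}}$ and inhomogeneity $\frac{1}{\sqrt{d_i}}$, which your $(\alpha_i, f_i)$ reproduce after the shift; I checked the algebra). Where you genuinely differ is the concluding step. The paper never integrates the equation: it argues qualitatively that once the perturbing terms are $\delta$-small, the strip $\frac{1}{\sqrt{d_i}(1+\delta)} < X_i/Y_i^2 < \frac{1}{\sqrt{d_i}(1-\delta)}$ is forward-invariant, and it kills the two monotone escape scenarios by noting that a monotone bounded solution would converge to some $a \neq \frac{1}{\sqrt{d_i}}$, forcing $(X_i/Y_i^2)^{\prime}$ to a nonzero limit --- a contradiction. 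You instead write the explicit variation-of-parameters formula and estimate the Duhamel integral using $\alpha_i \to 1$, $f_i \to 0$, splitting at a large time $T$; your treatment of both pieces ($e^{-A(s)}$ decay of the initial and $[s_*,T]$ contributions, the bound $\int_T^s e^{-(s-\tau)/2}\,d\tau \leq 2$ for the tail) is sound, and you correctly secure well-definedness of $v_i$ from $Y_i > 0$ on $[s_*,\infty)$. Both routes rest on the same inputs (Proposition \ref{omegalimit} and positivity of $Y_i$). Your integrating-factor version is more quantitative --- it would immediately yield a decay rate for $v_i$ given a rate for $X_j \to 0$ --- while the paper's trapping argument is more elementary and matches the style it reuses at $s \to -\infty$ in Lemma \ref{FHKlemma}, where the analogous limit is extracted by the same monotonicity dichotomy rather than by integration.
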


\begin{proof}
Observe that $X_i/Y_i^2$ satisfies the differential equation
\begin{equation} \label{eqnXY2}
\left( \frac{X_i}{Y_i^2} \right)^{\prime} =  \left(-1 -\sum_{j=1}^{r} X_j^2
+ \frac{2 X_i}{\sqrt{d_i}} \right) \frac{X_i}{Y_i^2} + \frac{1}{\sqrt{d_i}}\, .
\end{equation}
By Prop \ref{omegalimit}, the coefficient of $\frac{X_i}{Y_i^2}$ tends
to $-1$ as $s \rightarrow +\infty$. In particular, if $\frac{X_i}{Y_i^2}$
tends to a limit $a,$ then its derivative tends to
$-a + \frac{1}{\sqrt{d_i}},$ so $a$ must equal $\frac{1}{\sqrt{d_i}}$.

Let $0 < \delta < 1$, and pick $s^*(\delta)$ so that
the absolute value of $-\sum_{j=1}^{r} X_j^2 +
\frac{2X_i}{\sqrt{d_i}}$ is less than $\delta$ for $s > s^*(\delta)$.
It follows that
if $\frac{X_i}{Y_i^2} (s_0) \geq \frac{1}{\sqrt{d_i}(1 - \delta)}$
for some $s_0 > s^*(\delta),$
then $\left(\frac{X_i}{Y_i^2} \right)^{\prime} <0$ at $s_0$.
Similarly, if $\frac{X_i}{Y_i^2}(s_0) \leq \frac{1}{\sqrt{d_i}(1+\delta)}$,
 then $\left(\frac{X_i}{Y_i^2} \right)^{\prime} >0$ at $s_0$.

So if $\frac{X_i}{Y_i^2}$ enters the horizontal strip
$\frac{1}{\sqrt{d_i}(1+\delta)} < y < \frac{1}{\sqrt{d_i}(1-\delta)}$
at some $s > s^*(\delta)$ it is trapped there.
Hence one of the following must hold:

(i) $\frac{X_i}{Y_i^2} (s) \geq \frac{1}{\sqrt{d_i}(1-\delta)}$ for all $s > s^*(\delta),$

(ii) $\frac{X_i}{Y_i^2} (s) \leq \frac{1}{\sqrt{d_i}(1+\delta)}$ for all $s > s^*(\delta),$
or

(iii)
 $ \frac{1}{\sqrt{d_i}(1+\delta)}< \frac{X_i}{Y_i^2} < \frac{1}{\sqrt{d_i}(1-\delta)}$
 for $s$ sufficiently large.

In Case (i), $\frac{X_i}{Y_i^2}$ is monotonic decreasing and bounded below
by $\frac{1}{\sqrt{d_i}(1-\delta)},$ so it tends to a finite limit which
must be at least $\frac{1}{\sqrt{d_i}(1-\delta)}> \frac{1}{\sqrt{d_i}}$,
contradicting the discussion above. Case (ii) is eliminated similarly.

The remaining possibility is that (iii) holds for all $\delta$;
hence $\frac{X_i}{Y_i^2} \rightarrow \frac{1}{\sqrt{d_i}}$ as claimed.
\end{proof}

We deduce that $g_i \dot{g_i}$ asymptotically approaches the constant
$\lambda_i/\sqrt{-C}$, and obtain the following theorem.

\begin{thm} \label{asymptotics}
The metric corresponding to our trajectory is, to leading order in $t$
as $t \rightarrow +\infty$,
\[
dt^2 + t \; {\sf h}_{\infty}
\]
where the homothety class of  ${\sf h}_{\infty}$ is that of the product
Einstein metric on $M_1 \times \cdots \times M_r$.
So our metric has an asymptotically paraboloid geometry.
\end{thm}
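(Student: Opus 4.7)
The plan is to unpack the definitions of $X_i$ and $Y_i$, combine the previous lemma with the limit ${\mathcal L} \to -1$ from Proposition~\ref{omegalimit}, and then pass from $s$- to $t$-asymptotics to extract the leading behaviour of $g_i^2(t)$.

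First I would express $X_i/Y_i^2$ in the original variables. Using (\ref{def-Xi}) and (\ref{def-Yi}) a short calculation gives
\[
\frac{X_i}{Y_i^2} \;=\; \frac{g_i\,\dot g_i\,(-\dot u + \tr L)}{\sqrt{d_i}\,\lambda_i}.
\]
Since the previous lemma gives $X_i/Y_i^2 \to 1/\sqrt{d_i}$, this yields $g_i \dot g_i\,(-\dot u + \tr L) \to \lambda_i$ as $s \to +\infty$, which is the formula quoted between the lemma and the theorem.

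Next I would pin down $-\dot u + \tr L$. Proposition~\ref{omegalimit} gives ${\mathcal L} \to -1$, and from ${\mathcal L} = C/(\dot u - \tr L)^2$ with $C<0$, together with the fact that $Y_i>0$ along the trajectory (which forces $-\dot u + \tr L>0$), one obtains $-\dot u + \tr L \to \sqrt{-C}$. Substituting into the previous limit gives $g_i\,\dot g_i \to \lambda_i/\sqrt{-C}$.

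Finally I pass from $s$- to $t$-asymptotics. From (\ref{def-tgi}), $dt/ds = \sqrt{{\mathcal L}/C} \to 1/\sqrt{-C} > 0$, so $t \to +\infty$ as $s \to +\infty$. Then $\frac{d}{dt}(g_i^2) = 2 g_i \dot g_i \to 2\lambda_i/\sqrt{-C}$, and a routine L'H\^opital argument (valid since $t\to+\infty$) yields $g_i^2(t)/t \to 2\lambda_i/\sqrt{-C}$. Consequently, to leading order the metric is $dt^2 + t\,{\sf h}_\infty$ with
\[
{\sf h}_\infty \;=\; \frac{2}{\sqrt{-C}} \sum_{i=1}^{r} \lambda_i\,h_i,
\]
which is a positive constant multiple of $\sum_i \lambda_i h_i$. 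The latter is, after a global rescaling, the product Einstein metric on $M_1 \times \cdots \times M_r$ (since scaling each factor $h_i$ by the common factor $1/\mu$ realises a common Einstein constant $\mu$), so ${\sf h}_\infty$ lies in its homothety class. The only delicate points are the sign choice for $-\dot u + \tr L$ and the necessity of knowing $t\to+\infty$ before invoking L'H\^opital; neither presents a serious obstacle.
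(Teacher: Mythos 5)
Your proposal is correct and takes essentially the same route as the paper: the paper's entire proof consists of deducing that $g_i\dot{g_i} \to \lambda_i/\sqrt{-C}$ from the limit $X_i/Y_i^2 \to 1/\sqrt{d_i}$ together with ${\mathcal L} \to -1$ from Proposition \ref{omegalimit} (so $-\dot{u} + \tr L \to \sqrt{-C}$, with the sign fixed by $Y_i > 0$ exactly as you argue), and then reading off $g_i^2 \sim 2\lambda_i t/\sqrt{-C}$. The details you supply --- unpacking $X_i/Y_i^2$ in the original variables, checking $dt/ds \to 1/\sqrt{-C}$ so that $t \to +\infty$, the L'H\^opital step, and the observation that $\sum_i \lambda_i h_i$ is Einstein (each factor $\lambda_i h_i$ has Einstein constant $1$) and hence homothetic to the product Einstein metric --- are precisely the computations the paper leaves implicit.
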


\begin{rmk}
The Bryant solitons on $\mathbb R^n$ (for $n>2$) also have
asymptotically paraboloid geometry.  On the other hand the complete
steady K\"ahler solitons considered in \cite{DW} are asymptotically circle
bundles of constant radius over paraboloids.

These two kinds of asymptotics may be viewed as the Ricci soliton
analogues of the Asymptotically Conical (AC) and Asymptotically Locally
Conical (ALC) conditions satisfied by many of the known complete
non-compact Ricci-flat metrics (see \cite{CGLP} for example).
\end{rmk}

\section{\bf Analysing the flow}

To check smoothness at the collapsing submanifold, we must now analyse
the trajectory as $s$ tends to $-\infty$. Recall that
$X_1 \rightarrow \beta = \frac{1}{\sqrt{d_i}},
Y_1 \rightarrow +\sqrt{1- \beta^2}$ and the remaining
variables tend to 0.

\begin{rmk} \label{asympLyap}
Observe from (\ref{eqnLyap}) that $\mathcal L$ (and hence $g_i^2 Y_i^2$)
tend to zero exponentially fast as $s$ tends to $-\infty$.
\end{rmk}

The following lemma is often useful.

\begin{lemma} \label{FHKlemma}
Suppose a function $F$ satisfies a differential equation
\begin{equation} \label{eqnFHK}
F^{\prime} = H F + K
\end{equation}
where $H, K$ are functions tending respectively to finite limits
$h,  k $ as $s$ tends to $-\infty$, where $h<0$ and $k \neq 0$.

Then either $\lim_{s \rightarrow -\infty} F(s) = -\frac{k}{h}$
or $F$ tends to $\infty$ or $-\infty$ as $s$ tends to $-\infty$. Moreover
in the case of infinite limit $F$ is monotonic for sufficiently large
negative $s$.
\end{lemma}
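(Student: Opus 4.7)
The plan is to translate the equation so that the limiting equilibrium sits at the origin, to construct a distinguished particular solution tending to this equilibrium as $s \to -\infty$, and then to exploit one-dimensional linearity to show that every other solution diverges monotonically.

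First I substitute $F = -k/h + G$, obtaining
\[
G^{\prime} = HG + \phi, \qquad \phi := K - \frac{k}{h}H,
\]
and observe that $\phi(s) \to 0$ as $s \to -\infty$. To produce a solution $F^{*}$ with $F^{*}(s) \to -k/h$, it suffices to find a solution $G^{*}$ of the translated equation tending to zero. I would do this by a variation-of-parameters integral designed to pick out the unique bounded solution at $-\infty$. Reversing the parameter via $\sigma = -s$ turns the ODE into $dG/d\sigma = \tilde{H} G + \tilde{\phi}$ with $\tilde{H} \to -h > 0$ and $\tilde{\phi} \to 0$ as $\sigma \to +\infty$. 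Choosing $\sigma_{0}$ large enough that $\tilde{H}(\rho) \geq |h|/2$ for $\rho \geq \sigma_{0}$, I set
\[
G^{*}(\sigma) := -\int_{\sigma}^{\infty} \tilde{\phi}(\tau)\, \exp\!\left( -\int_{\sigma}^{\tau} \tilde{H}(\rho)\, d\rho \right) d\tau,
\]
which converges absolutely thanks to the exponential decay, satisfies $|G^{*}(\sigma)| \leq (2/|h|)\sup_{\tau \geq \sigma}|\tilde{\phi}(\tau)| \to 0$, and solves the ODE by differentiation under the integral sign.

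For the dichotomy, let $F$ be any solution and set $D := F - F^{*}$. Then $D^{\prime} = HD$, so $D(s) = c\,\exp\!\left( \int_{s_{0}}^{s} H\, d\tau \right)$ for some constant $c$. Since $H \to h < 0$, this exponential tends to $+\infty$ as $s \to -\infty$. If $c = 0$, then $F = F^{*} \to -k/h$; otherwise $|D(s)| \to \infty$ with sign determined by $c$, and since $F^{*}$ is bounded, $F(s) \to +\infty$ or $-\infty$ accordingly. For monotonicity, differentiate to get $F^{\prime} = F^{*\prime} + c H\,\exp\!\left(\int_{s_{0}}^{s} H\, d\tau\right)$; since $F^{*\prime} = H F^{*} + K \to h(-k/h) + k = 0$ while the second term diverges in absolute value with sign opposite to $c$ (because $H$ is eventually negative), $F^{\prime}$ has constant sign for $s$ sufficiently negative.

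The main technical obstacle is the second step: confirming that the integral formula for $G^{*}$ simultaneously converges, actually solves the ODE, and tends to zero. All three properties hinge on the exponential decay of $\exp\!\left(-\int_{\sigma}^{\tau} \tilde{H}\,d\rho\right)$, which requires choosing $\sigma_{0}$ large enough that $\tilde{H}$ is uniformly bounded away from zero on $[\sigma_{0},\infty)$, combined with the elementary fact that $\sup_{\tau \geq \sigma}|\tilde{\phi}(\tau)| \to 0$ as $\sigma \to \infty$.
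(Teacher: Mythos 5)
Your proof is correct, and it takes a genuinely different route from the paper's. The paper argues by direct barriers: for the case $k>0$ it fixes $0<\delta<\min(-h,k)$ and $s^*(\delta)$ with $h-\delta<H<h+\delta<0$ and $0<k-\delta<K<k+\delta$ on $(-\infty,s^*(\delta)]$, and checks the sign of $F'$ on the level sets $\frac{k-\delta}{-h+\delta}$ and $\frac{k+\delta}{-h-\delta}$: the regions below the first and above the second are invariant backwards in $s$, so a trajectory entering either is monotone there and must diverge (a finite limit $\xi\neq-\frac{k}{h}$ is impossible since then $F'\to h\xi+k\neq0$); if neither happens for any $\delta$, then $F$ is squeezed between the two barriers, which pinch to $-\frac{k}{h}$ as $\delta\to0$. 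You instead exploit linearity: after translating the equilibrium to the origin you build, by a time-reversed variation-of-parameters integral, the particular solution $F^*$ tending to $-\frac{k}{h}$, and observe that every solution is $F^*+c\,e^{\int_{s_0}^{s}H\,d\tau}$ with the homogeneous factor blowing up as $s\to-\infty$; the trichotomy and the eventual monotonicity (via $F'=F^{*\prime}+cHe^{\int_{s_0}^{s}H\,d\tau}$ with $F^{*\prime}\to h\bigl(-\frac{k}{h}\bigr)+k=0$) drop out at once. Your technical verifications are sound — the integral converges absolutely once $\tilde H\geq|h|/2$, differentiation under the integral gives the ODE, $|G^*(\sigma)|\leq\frac{2}{|h|}\sup_{\tau\geq\sigma}|\tilde\phi(\tau)|\to0$, and your sign bookkeeping is consistent ($F\to+\infty$ as $s\to-\infty$ indeed pairs with eventually negative $F'$). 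Comparing the two: your argument makes the trichotomy structurally transparent (the solution space is a one-dimensional affine family, with the convergent alternative realized by exactly one solution), and it never even uses the hypothesis $k\neq0$; on the other hand it needs $H,K$ locally integrable near $-\infty$ to run variation of parameters, whereas the paper's comparison argument is entirely elementary, would survive mildly nonlinear perturbations where superposition is unavailable, and — relevantly for how Lemma \ref{FHKlemma} is used later — directly outputs the explicit two-sided bounds $\frac{k-\delta}{-h+\delta}<F<\frac{k+\delta}{-h-\delta}$ on non-divergent trajectories, which is exactly the ``positive lower bound on $(X_1-\beta)/\mathcal{L}$'' that the paper extracts before Lemma \ref{XbLfin}; from your proof that bound is still available, but only implicitly (eventual positivity in the two non-$(-\infty)$ alternatives) rather than as a quantitative byproduct.
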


\begin{proof}
We give the proof for the case $k > 0$ below; obvious modifications
yield the proof for the case $k < 0$.

Let $0 < \delta < {\rm min} (-h, k)$, and choose $s^*(\delta)$ so that
for all $s \leq s^*(\delta)$ we have
\[
h - \delta < H(s) < h + \delta <0 \;\; : \;\;
0< k - \delta < K(s) < k + \delta.
\]

If $F(s_0) \leq \frac{k -\delta}{-h + \delta}$ for some $s_0 \leq s^*(\delta)$,
then $F^{\prime}(s_0) >0$. Hence these inequalities for $F,F^{\prime}$
actually hold for all $s \leq s_0$. So as $s$ tends to $-\infty$, either $F$
tends to $-\infty$ (monotonically on $(-\infty, s_0]$) or to a finite limit
$\xi <  \frac{k -\delta}{-h + \delta} < -\frac{k}{h}$. But in the latter
case $F^{\prime}$ tends to a nonzero limit, which is impossible.

If $F(s_0) \geq \frac{k + \delta}{-h - \delta},$ then we similarly see that
$F$ tends monotonically to $+\infty$ as $s$ tends to $-\infty$.

So if $F$ does not tend monotonically to $\pm \infty$, we see that
for all such $\delta$ we have $\frac{k -\delta}{-h + \delta}
< F(s) < \frac{k + \delta}{-h - \delta}$ on $(-\infty, s^*(\delta)]$.
Hence $F$ tends to $-\frac{k}{h}$ as $s$ tends to $-\infty$.
\end{proof}

\begin{lemma} \label{XYbounded}
The function $X_i / Y_i^2$ is positive, and remains bounded as $s$ tends to
$-\infty$.
\end{lemma}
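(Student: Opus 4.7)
My strategy will be to combine a sign argument on $X_i$ with Lemma \ref{FHKlemma} applied to (\ref{eqnXY2}): positivity of $X_i$ (together with $Y_i > 0$, already established) will give positivity of $X_i/Y_i^2$ and kill the $-\infty$ branch of the FHK trichotomy, while a differential-inequality argument will kill the $+\infty$ branch. The net effect is convergence of $X_i/Y_i^2$ to a positive finite limit, which in particular yields both positivity and boundedness.

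\emph{Step 1 (positivity).} The trajectory lies in $\{\mathcal{L}<0\}$, so by (\ref{Lyap}) we have $\sum_j X_j^2 < 1$ throughout, making the factor $\sum_j X_j^2 - 1$ appearing in (\ref{eqnX}) strictly negative. Suppose for contradiction $X_i(s_0) \leq 0$. If $X_i(s_0) = 0$, (\ref{eqnX}) gives $X_i^{\prime}(s_0) = Y_i^2(s_0)/\sqrt{d_i} > 0$, so $X_i$ becomes strictly negative slightly to the left of $s_0$; we may therefore assume $X_i(s_0) < 0$. At any $s \leq s_0$ where $X_i(s) < 0$, the first term of (\ref{eqnX}) is the product of two negatives and the second is positive, whence $X_i^{\prime}(s) > 0$. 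If $s_1 := \sup\{s \leq s_0 : X_i(s) \geq 0\}$ were finite, continuity would force $X_i(s_1) = 0$ and then $X_i^{\prime}(s_1) > 0$, making $X_i > 0$ just to the right of $s_1$ and contradicting the definition of $s_1$; hence $s_1 = -\infty$, and $X_i$ is strictly negative on $(-\infty, s_0]$ with $X_i^{\prime} > 0$ throughout. Thus $X_i$ is bounded above by $X_i(s_0) < 0$ on $(-\infty, s_0]$, contradicting $X_i \to 0$ as $s \to -\infty$ (the trajectory emerges from the critical point where $X_i = 0$ for $i > 1$). Combined with $Y_i > 0$, we obtain $X_i/Y_i^2 > 0$.

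\emph{Step 2 (FHK trichotomy) and Step 3 (ruling out $+\infty$).} Apply Lemma \ref{FHKlemma} to (\ref{eqnXY2}) with $F = X_i/Y_i^2$, $H = -1 - \sum_j X_j^2 + 2X_i/\sqrt{d_i}$, and $K \equiv 1/\sqrt{d_i}$. As $s \to -\infty$, $H \to h := -1 - \beta^2 < 0$ and $k := 1/\sqrt{d_i} > 0$, so $X_i/Y_i^2$ either converges to $-k/h = 1/(\sqrt{d_i}(1+\beta^2)) > 0$ or tends monotonically to $\pm\infty$; positivity from Step 1 eliminates $-\infty$. If instead $X_i/Y_i^2 \to +\infty$, then for each large $N$ there is $s_*(N)$ with $Y_i^2 < X_i/N$ on $(-\infty, s_*(N)]$, so substituting into (\ref{eqnX}) gives
\[
X_i^{\prime} < X_i\Bigl[\sum_j X_j^2 - 1 + \frac{1}{N\sqrt{d_i}}\Bigr].
\]
Since $\sum_j X_j^2 \to \beta^2 < 1$, we can choose $N$ large and $s$ sufficiently negative so that the bracket is bounded above by $-\eta$ for some $\eta>0$; then $(\log X_i)^{\prime} < -\eta$, and integrating backward forces $X_i \to +\infty$ as $s \to -\infty$, contradicting $X_i \to 0$. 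The trichotomy thus collapses to convergence, giving boundedness. The main obstacle will be Step 3: converting the hypothesis $X_i/Y_i^2 \to +\infty$ into a clean scalar differential inequality on $X_i$ alone and verifying that all parameters can be chosen uniformly on a tail; Step 1, while sign-sensitive, is essentially routine once one exploits $\mathcal{L} < 0$.
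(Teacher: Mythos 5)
Your proof is correct, but the boundedness half takes a genuinely different route from the paper's. Your positivity step coincides with the paper's argument (there too, $\sum_j X_j^2 - 1 \leq \mathcal{L} < 0$ plus a backward continuation; note only that the paper phrases the contradiction so as to cover $i=1$ as well, where the limit is $\beta > 0$ rather than $0$ --- your parenthetical mentions only $i>1$, though your conclusion $X_i(s) < X_i(s_0) < 0$ on $(-\infty, s_0]$ contradicts either limit). For boundedness, the paper does not invoke Lemma \ref{FHKlemma} at all: it fixes $s^*$ so that $\sum_j X_j^2 - 1 < \frac{1}{2}(\beta^2 - 1)$ and $\sum_j X_j^2 - X_i/\sqrt{d_i} > \frac{1}{2}\beta^2$ (hence $Y_i^{\prime} > 0$) on $(-\infty, s^*]$, and shows that if $X_i/Y_i^2$ exceeds the explicit threshold $\frac{2}{\sqrt{d_i}(1-\beta^2)}$ at some $s_0 \leq s^*$, then $X_i^{\prime} < 0$ there and the whole configuration propagates backward on $(-\infty, s_0]$, contradicting $X_i \to 0$; the FHK trichotomy enters only afterwards, in Prop.\ \ref{XYlimit}, with this lemma as input. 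You instead run Lemma \ref{FHKlemma} inside the lemma itself and kill the $+\infty$ branch by converting $Y_i^2 < X_i/N$ into the log-derivative bound $(\log X_i)^{\prime} < -\eta$ on a tail, forcing $X_i \to +\infty$ backward --- a correct Gronwall-type argument, and there is no circularity since Lemma \ref{FHKlemma} precedes this lemma and is independent of it. Your route buys more: it delivers the limit $\frac{1}{\sqrt{d_i}(1+\beta^2)}$ at once, effectively absorbing Prop.\ \ref{XYlimit}, whereas the paper's trapping argument is more elementary and keeps the two conclusions separate. One small point of hygiene: your $h = -1 - \beta^2$ is the value of the limit for $i > 1$ (where $X_i \to 0$); for $i = 1$ one gets $h = \beta^2 - 1$, still negative, but there boundedness is trivial anyway since $Y_1 \to \hat{\beta} \neq 0$, so you should either restrict Steps 2--3 to $i > 1$ or note this case separately, as the paper does.
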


\begin{proof} {   }

(i) {\bf Positivity}. Note that  $(\sum_{j=1}^{r} X_j^2) -1$ is bounded
above by $\mathcal L$ so it is negative. Now since $Y_i > 0$ for all $s$,
if $X_i \leq 0$ at some $s_0$, we see that $X_i^{\prime}$ is positive at $s_0$.
Hence $X_i$ is negative and $X_i^{\prime}$ positive on $(-\infty, s_0)$, which
contradicts the fact that $\lim_{s \rightarrow -\infty} X_i =0$ (if $i>1)$,
or $\beta = \frac{1}{\sqrt{d_1}}>0$ if $i=1$. We deduce that $X_i$ is positive
for all finite $s$.

(ii) {\bf Boundedness}. This is trivial if $i=1$, so in what follows we take
$i>1$. We know that $(\sum_{i=1}^{r} X_j^2) -1$ tends to $\beta^2 -1$
and $(\sum_{j=1}^{r} X_j^2) - \frac{X_i}{\sqrt{d_i}}$ tends to $\beta^2$ as $s$
tends to $-\infty$. Pick $s^*$ so that
$(\sum_{j=1}^{r} X_j^2) - 1 < \frac{1}{2} (\beta^2 -1)<0$ and
$(\sum_{j=1}^{r} X_j^2) - \frac{X_i}{\sqrt{d_i}} > \frac{1}{2} \beta^2 > 0$
for $s \leq s^*.$ Note that this implies that $Y_i^{\prime}$ is positive on
$(-\infty, s^*]$, since, as discussed in \S 3, we can take the $Y_i$ to be positive.

Suppose that $\frac{X_i}{Y_i^2} > \frac{2}{\sqrt{d_i}(1- \beta^2)}$ at some
$s_0 \leq s^*$. It follows from our choice of $s^*$ that $X_i^{\prime}$ is
negative at $s_0$. As remarked above, $Y_i$ and $Y_i^{\prime}$ are positive
on $(-\infty, s^*]$. It follows that these inequalities for $X_i/Y_i^2$ and
the derivatives of $X_i, Y_i$ actually hold on $(-\infty, s_0]$. But this
contradicts the fact that $X_i$ tends to zero as $s$ tends to
$-\infty$. So we have the desired bound on $X_i / Y_i^2$.
\end{proof}

\begin{prop} \label{XYlimit}
For $i > 1$, we have
\[
\lim_{s \rightarrow -\infty} \frac{X_i}{Y_i^2} = \frac{1}{\sqrt{d_i}(1 + \beta^2)}.
\]
\end{prop}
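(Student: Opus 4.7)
The plan is to combine equation (\ref{eqnXY2}) with the two preceding lemmas: Lemma \ref{FHKlemma} identifies the only two possible behaviours of $X_i/Y_i^2$ as $s \to -\infty$, and Lemma \ref{XYbounded} rules out the unbounded one.

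First I would set $F = X_i/Y_i^2$ and read off the ODE (\ref{eqnXY2}) in the form $F' = HF + K$ with
\[
H(s) = -1 - \sum_{j=1}^{r} X_j^2(s) + \frac{2X_i(s)}{\sqrt{d_i}}, \qquad K(s) \equiv \frac{1}{\sqrt{d_i}}.
\]
The asymptotic behaviour of the trajectory as $s \to -\infty$ is already recorded at the beginning of this section: $X_1 \to \beta$, $Y_1 \to \hat\beta$, and $X_j, Y_j \to 0$ for $j>1$. Since we assume $i>1$, this gives
\[
\lim_{s \to -\infty} H(s) = -1 - \beta^2 =: h < 0, \qquad \lim_{s \to -\infty} K(s) = \frac{1}{\sqrt{d_i}} =: k > 0,
\]
so the hypotheses of Lemma \ref{FHKlemma} are satisfied.

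Next I would invoke Lemma \ref{FHKlemma}: either $F$ converges to $-k/h = \frac{1}{\sqrt{d_i}(1+\beta^2)}$ as $s \to -\infty$, or $F$ tends monotonically to $+\infty$ or $-\infty$. At this point I would appeal to Lemma \ref{XYbounded}, which states precisely that $X_i/Y_i^2$ is positive and bounded as $s \to -\infty$. This immediately excludes the two infinite-limit alternatives, leaving only the finite limit $\frac{1}{\sqrt{d_i}(1+\beta^2)}$.

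There is no real obstacle here: all the analytical work has been packaged into Lemmas \ref{FHKlemma} and \ref{XYbounded}, and the proposition follows by direct application, once one verifies that the coefficient $H$ and the forcing term $K$ in (\ref{eqnXY2}) indeed satisfy the sign conditions of Lemma \ref{FHKlemma}. The only small point requiring care is to confirm that the trajectory's limit behaviour as $s \to -\infty$ (used to compute $h$) is the critical point described at the start of the section, which is exactly how our unstable manifold trajectories are parametrised.
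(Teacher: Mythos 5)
Your proposal is correct and follows exactly the paper's own argument: the paper likewise observes that (\ref{eqnXY2}) has the form (\ref{eqnFHK}) with $h = -(1+\beta^2)$ and $k = \frac{1}{\sqrt{d_i}}$, applies Lemma \ref{FHKlemma}, and uses the positivity and boundedness from Lemma \ref{XYbounded} to exclude the infinite-limit alternatives. Your write-up simply spells out the limit computation for $H$ (using $X_1 \to \beta$ and $X_j \to 0$ for $j>1$) that the paper leaves implicit.
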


\begin{proof}
  The differential equation (\ref{eqnXY2}) is of the form
  (\ref{eqnFHK}), with $h = -(1 + \beta^2)$ and $k =
  \frac{1}{\sqrt{d_i}}$.  The desired result now follows from Lemma
  \ref{FHKlemma} and Lemma \ref{XYbounded}.
\end{proof}

\begin{cor}
As $s$ tends to $-\infty$, the arclength $t$ can be chosen to tend to
zero. Moreover, we have the following limiting values for $g_i(t)$ as
$t$ tends to $0$.
\[
g_1 (0) =0 \;\; : \;\; \dot{g}_{1}(0) = 1 \;\; :  \;\; \dot{g}_{i}(0) =0  \; (i>1).
\]
\end{cor}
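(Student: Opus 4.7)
The plan is to derive the four claimed limits directly from the change-of-variables formulas (\ref{def-tgi}), (\ref{logdiff-gi}) and from the limiting behavior of $X_i, Y_i, \mathcal{L}$ already established for $s \to -\infty$.

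First, for the arclength: by Remark \ref{asympLyap}, $\mathcal{L} \to 0$ exponentially fast. Since $dt/ds = \sqrt{\mathcal{L}/C}$ (note $C<0$ and $\mathcal{L}<0$, so the radicand is positive), the integral $\int_{-\infty}^{s_0} \sqrt{\mathcal{L}/C}\, ds$ converges absolutely, so choosing the additive constant in $t$ appropriately makes $t(s) \to 0$ as $s \to -\infty$.

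Next, combining (\ref{logdiff-gi}) with the expression $g_i = \sqrt{\mathcal{L}/C}\,\sqrt{d_i\lambda_i}/Y_i$ from (\ref{def-tgi}) produces the clean identity $\dot{g}_i = \sqrt{\lambda_i}\, X_i/Y_i$. From $g_1 = \sqrt{\mathcal{L}/C}\,\sqrt{d_1\lambda_1}/Y_1$, together with $\mathcal{L}\to 0$ and $Y_1 \to \hat{\beta} = \sqrt{1-\beta^2} > 0$, we immediately get $g_1(0) = 0$. For the spherical factor, substituting $\lambda_1 = d_1-1$, $X_1 \to \beta = 1/\sqrt{d_1}$, $Y_1 \to \sqrt{(d_1-1)/d_1}$ into $\dot{g}_1 = \sqrt{\lambda_1}\,X_1/Y_1$ yields the limit $1$.

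Finally, for $i>1$ we rewrite
\[
\dot{g}_i \;=\; \sqrt{\lambda_i}\,\frac{X_i}{Y_i^2}\cdot Y_i.
\]
Proposition \ref{XYlimit} gives that $X_i/Y_i^2$ converges to the finite value $1/(\sqrt{d_i}(1+\beta^2))$, while $Y_i \to 0$. Hence $\dot{g}_i \to 0$, completing the verification. There is essentially no obstacle here: everything is a direct algebraic consequence of the previously proved limits of $X_i, Y_i, \mathcal{L}$, with the only non-routine input being the ratio identity from Proposition \ref{XYlimit} used to kill the $0/0$ indeterminacy in the $i>1$ case.
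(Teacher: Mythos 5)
Your proof is correct and takes essentially the same route as the paper's: convergence of $t$ via Remark \ref{asympLyap} and the integrability of $dt/ds=\sqrt{\mathcal{L}/C}$, the formula $g_1=\sqrt{\mathcal{L}/C}\,\sqrt{d_1\lambda_1}/Y_1$ for $g_1(0)=0$, and the identity $\dot{g}_i=\sqrt{\lambda_i}\,X_i/Y_i$ combined with the known limits of $X_i,Y_i$ (and $\lambda_1=d_1-1$) for the derivative limits. The only, harmless, difference is that for $i>1$ you invoke the precise limit of $X_i/Y_i^2$ from Proposition \ref{XYlimit}, whereas the paper's proof needs only the boundedness of $X_i/Y_i^2$ from Lemma \ref{XYbounded}, since $Y_i\to 0$ already forces $\dot{g}_i\to 0$.
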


\begin{proof}
The first statement follows from Remark \ref{asympLyap} and equation
(\ref{st}). The statement about $g_1$ comes from (\ref{def-tgi}) using
this Remark and the fact that $Y_1$ tends to a nonzero value as $s$ tends
to $-\infty$.

Since $\dot{g_i} = \sqrt{\lambda_i} (X_i/Y_i),$ the remaining limits
follow from Prop \ref{XYbounded}, the known limits of $X_i, Y_i$,
and the fact that $\lambda_1 = d_1 -1$.
\end{proof}

\begin{prop} \label{gi}
For $i > 1$, $g_i(0)$ is finite and nonzero.
\end{prop}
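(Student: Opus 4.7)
The plan is to exploit a remarkably clean identity relating $g_i$ to the flow variables. Taking the logarithmic derivative of the formula $g_i = \sqrt{\mathcal{L}/C}\,\sqrt{d_i\lambda_i}/Y_i$ from (\ref{def-tgi}) and substituting $\mathcal{L}'/\mathcal{L} = 2\sum_j X_j^2$ from (\ref{eqnLyap}) together with $Y_i'/Y_i = \sum_j X_j^2 - X_i/\sqrt{d_i}$ from (\ref{eqnY}) yields
\[
\frac{d}{ds}\log g_i \;=\; \frac{X_i}{\sqrt{d_i}}.
\]
Thus $\log g_i(s_0) - \log g_i(s) = \frac{1}{\sqrt{d_i}}\int_{s}^{s_0} X_i(\sigma)\, d\sigma$, and the problem is reduced to showing that $\int_{-\infty}^{s_0} X_i(\sigma)\,d\sigma$ converges for some (any) fixed $s_0$. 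Since the previous corollary already established $g_1(0)=0$, we only consider $i > 1$.

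For the convergence I would combine two pieces of information already in hand. Proposition \ref{XYlimit} gives a finite limit for $X_i/Y_i^2$, so it suffices to show that $Y_i^2$ decays exponentially as $s \to -\infty$. Equation (\ref{eqnY}), combined with the limits $X_1 \to \beta$, $X_j \to 0$ for $j > 1$, and $Y_j \to 0$ for $j > 1$, gives
\[
\frac{Y_i'}{Y_i} \;=\; \sum_{j=1}^r X_j^2 - \frac{X_i}{\sqrt{d_i}} \;\longrightarrow\; \beta^2 \;>\; 0.
\]
Hence for any $\eta \in (0, \beta^2)$ there exists $s^*$ such that $Y_i'/Y_i > \beta^2 - \eta$ on $(-\infty, s^*]$, from which the differential inequality yields $Y_i(s) \le A\, e^{(\beta^2 - \eta)s}$ and consequently $X_i(s) \le B\, e^{2(\beta^2 - \eta)s}$ for some positive constants $A, B$.

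With this exponential decay, $\int_{-\infty}^{s_0} X_i(\sigma)\, d\sigma$ converges absolutely. Since $X_i > 0$ by Lemma \ref{XYbounded}(i), the integral is a finite \emph{positive} number, so $\log g_i$ has a finite limit as $s \to -\infty$. It follows that $g_i(0) = \lim_{s\to-\infty} g_i(s)$ is finite and strictly positive, which is the required conclusion. The argument is essentially bookkeeping on top of Proposition \ref{XYlimit}; the only place genuine care is needed is in extracting the honest exponential decay of $Y_i$ (rather than merely knowing the linearised rate at the critical point), which is supplied by the differential inequality argument above. No serious obstacle is expected.
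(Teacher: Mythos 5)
Your proof is correct, and it shares its skeleton with the paper's own argument: both start from the identity $(\log g_i)^{\prime} = X_i/\sqrt{d_i}$ (equivalently (\ref{logdiff-gi}) read in the $s$-variable), so that positivity of $X_i$ gives monotonicity and hence a finite nonnegative limit, and positivity of $g_i(0)$ reduces to convergence of $\int_{-\infty}^{s_0} X_i \, ds$. Where you genuinely diverge is in the integrability step. The paper proves no decay rate at this stage: using (\ref{eqnY}) it rewrites $X_i = \frac{1}{2}\, \frac{X_i}{Y_i^2} \, (Y_i^2)^{\prime} \big/ \bigl(\sum_{j} X_j^2 - X_i/\sqrt{d_i}\bigr)$, so that after bounding the outer factors (Prop.~\ref{XYlimit} for $X_i/Y_i^2$, the limit $\beta^2$ of the denominator, and $Y_i^{\prime} > 0$ near $s = -\infty$) the integral telescopes and is controlled by the total change of $Y_i^2$, which is bounded; this yields the displayed bound on $g_i(s^*)/g_i(s)$. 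You instead establish honest exponential decay: from $Y_i^{\prime}/Y_i \to \beta^2 > 0$ a Gronwall/comparison argument gives $Y_i \leq A e^{(\beta^2 - \eta)s}$, whence $X_i \leq B e^{2(\beta^2-\eta)s}$ by the boundedness of $X_i/Y_i^2$ (Lemma \ref{XYbounded}), and absolute convergence of the integral follows. Both routes rest on the same inputs (positivity and boundedness of $X_i/Y_i^2$, the ODE (\ref{eqnY}), and the limits at the critical point), and neither is circular. Yours buys explicit decay rates -- in fact it anticipates the asymptotics $Y_i \sim e^{\beta^2 s}$, $X_i \sim e^{2\beta^2 s}$ that the paper only records later, in Remark \ref{asymp} -- at the modest cost of introducing $\eta$ and $s^*$, while the paper's substitution trick is rate-free and slightly shorter. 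You also correctly identified the one delicate point: the decay must be derived from the nonlinear flow, not merely quoted from the linearisation at the critical point, and your differential inequality supplies exactly that.
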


\begin{proof}
Observe that $\frac{d}{ds} (g_i^2) = 2g_i^2 X_i/\sqrt{d_i},$ which is positive
for all  $s$. So $g_i^2$ tends to a finite, nonnegative limit at $s=-\infty$.

To get positivity, we consider
\[
\frac{g_i^{\prime}}{g_i} =  \frac{X_i}{\sqrt{d_i}}
=\frac{1}{2 \sqrt{d_i}} \frac{X_i}{Y_i^2}
\frac{ (Y_i^2)^{\prime}}{(\sum_{j=1}^{r} X_j^2) - \frac{X_i}{\sqrt{d_i}}}.
\]
Integrating, using Prop. \ref{XYlimit}, and observing that the
denominator in the last factor tends to $\beta^2$,  we get
a bound
\[
\frac{g_i(s^*)}{g_i(s)} \leq \exp \left(\frac{Y_i^2(s^*)- Y_i^2(s)}
{2d_i (\beta^2 -\delta)(1 + \beta^2 - \delta)} \right)
\]
(for some positive $\delta$),
giving the desired positive lower bound on $g_i(s)$.
\end{proof}

\medskip

We shall next obtain some estimates that will be useful for
studying the second derivatives of $g_i$ and $u$.

\begin{lemma} \label{X1upper}
We have $X_1 < \beta$ for all finite $s$.
\end{lemma}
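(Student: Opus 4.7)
The plan is to set $V := X_1 - \beta$ and derive a linear first-order ODE of the form $V' + Y_1^2 V = G$ with a strictly negative forcing term $G$, then apply a standard integrating-factor argument anchored at $s = -\infty$.

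First, I would rewrite equation (\ref{eqnX}) for $X_1$ by substituting $\sum_j X_j^2 - 1 = \mathcal{L} - \sum_j Y_j^2$ (via (\ref{Lyap})) and combining the two $Y_1^2$ contributions using $1/\sqrt{d_1} = \beta$. The algebraic upshot is
\[
V' + Y_1^2\, V \;=\; X_1\!\left(\mathcal{L} - \sum_{j > 1} Y_j^2\right),
\]
whose right-hand side is strictly negative for all finite $s$, since $X_1 > 0$ by Lemma \ref{XYbounded}, $\mathcal{L} < 0$ by the discussion following Remark \ref{Bohm}, and $\sum_{j > 1} Y_j^2 \geq 0$.

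Next I would introduce the integrating factor $\mu(s) := \exp\!\left(\int_{s_0}^s Y_1^2\,d\tau\right)$ for some fixed $s_0$. The ODE becomes $(\mu V)' = \mu G < 0$, so $\mu V$ is strictly decreasing on $\mathbb{R}$. To conclude $V < 0$ it suffices to show $\lim_{s \to -\infty} (\mu V)(s) = 0$: we have $V \to 0$ since $X_1 \to \beta$, and $\mu(s) \to 0$ since $Y_1 \to \hat\beta > 0$ forces $\int_{s}^{s_0} Y_1^2\,d\tau \to +\infty$, and both factors stay bounded on $(-\infty, s_0]$. Strict monotonicity then gives $\mu V < 0$, hence $V < 0$, at every finite $s$.

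The only conceptual step with any content is the first one: recognising that the nonlinear dependence of $X_1'$ on $X_1$ can be repackaged as a linear ODE in $V$ once the Lyapunov identity and the definition of $\beta$ are exploited. After that, all the dynamical input one needs---positivity of $X_1$, trapping in $\mathcal L < 0$, and the limiting value of $Y_1$ at $-\infty$---is already in hand from earlier sections, so no substantive obstacle remains.
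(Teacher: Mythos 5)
Your proof is correct, and it is a close but genuinely distinct variant of the paper's argument. The algebra checks out: with $V = X_1 - \beta$, substituting $\sum_j X_j^2 - 1 = \mathcal{L} - \sum_j Y_j^2$ into (\ref{eqnX}) and using $\beta = 1/\sqrt{d_1}$ does give $V' = -Y_1^2\,V + X_1\bigl(\mathcal{L} - \sum_{j\geq 2} Y_j^2\bigr)$, and every input you invoke is legitimately available with no circularity: $X_1 > 0$ comes from Lemma \ref{XYbounded}, which precedes Lemma \ref{X1upper} in the paper and does not use it; $\mathcal{L} < 0$ along the trajectory and $Y_1 \to \hat\beta > 0$, $V \to 0$ at $s=-\infty$ are established in \S 3. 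The paper instead groups the \emph{same} equation differently, as $X_1' = (X_1-\beta)\bigl(\mathcal{L} - \sum_{j=1}^r Y_j^2\bigr) + \beta\bigl(\mathcal{L} - \sum_{j=2}^r Y_j^2\bigr)$, and runs a qualitative backward barrier argument: if $X_1 \geq \beta$ at some $s_0$, both brackets are negative, so $X_1' < 0$ there, the configuration $X_1 > \beta$, $X_1' < 0$ propagates over $(-\infty, s_0]$, and this contradicts $X_1 \to \beta$ at $-\infty$. The trade-offs: your integrating factor converts the barrier argument into a one-line monotonicity statement ($\mu V$ strictly decreasing with limit $0$ at $-\infty$, hence negative at every finite $s$), which is arguably cleaner and implicitly quantitative; the price is that your forcing term is negative only because $X_1 > 0$, an extra dynamical input that the paper's grouping avoids entirely, since its linear coefficient and forcing are negative using only $\mathcal{L} < 0$ and $\beta > 0$. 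Both proofs share the essential anchor — the trajectory emanates from the critical point, so $X_1 - \beta$ vanishes at $s = -\infty$, and the sign of the forcing then determines the sign of $X_1 - \beta$ for all finite $s$.
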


\begin{proof}
We can rewrite the equation (\ref{eqnX}) for $X_1$ as
\[
X_1^{\prime} = (X_1 - \beta) \left( {\mathcal L} - \sum_{j=1}^{r} Y_j^2 \right)
+ \beta \left({\mathcal L} - \sum_{j=2}^{r} Y_j^2 \right).
\]
The terms in the second and third brackets are negative for finite
$s$, since $\mathcal L$ is. Hence if $X_1 \geq \beta$ at some $s_0$ we
see $X_1^{\prime}$ is negative at $s_0$, and hence these inequalities
hold on $(-\infty, s_0]$. It follows that $X_1$ cannot tend to $\beta$
as $s$ tends to $-\infty$, a contradiction.
\end{proof}

Recalling that $\mathcal L$ is negative on our trajectory, this shows that
$\frac{X_1 - \beta}{\mathcal L}$ is positive.

\smallskip

We note next that $(X_1 - \beta)/{\mathcal L}$ satisfies the differential
equation
\[
\left(\frac{X_1 - \beta}{\mathcal L} \right)^{\prime} = \left(\frac{X_1 - \beta}{\mathcal L}
\right) \left( -1 - \sum_{j=1}^{r} X_j^2 \right) + \beta \left(
\frac{ \sum_{j=1}^{r} X_j^2 + Y_1^2 -1 }{\mathcal L} \right).
\]
Observe also that
\[
\rho := \lim_{s \rightarrow -\infty} \frac{ \sum_{j=1}^{r} X_j^2 + Y_1^2 -1}{\mathcal L}
\]
exists and is a finite number greater than $1$, because the
numerator is ${\mathcal L} - \sum_{j=2}^{r} Y_j^2$, and we know for
$j>1$ that $Y_j^2 / {\mathcal L}$ tends to a finite negative limit as
$s$ tends to $-\infty$ (as this is a negative constant times $g_j^{-2}$).

Now Lemma \ref{FHKlemma} shows that as $s$ tends to
$-\infty$, $\frac{X_1 - \beta}{\mathcal L}$ either tends to infinity
or tends to the positive limit $\frac{\beta \rho}{1  + \beta^2}$.
Moreover, because it cannot tend to $-\infty$, Lemma \ref{FHKlemma}
gives a positive lower bound on $\frac{X_1 - \beta}{\mathcal L}$.

\begin{rmk} \label{asymp} We can make some statements about the decay
  rates of our variables as $s$ tends to $-\infty$.  From (\ref{eqnY})
  we see that for $i > 1$ we have $Y_i \sim e^{\beta^2 s}$, hence, by
 Prop. \ref{XYlimit}, we also have $X_i \sim e^{2 \beta^2 s}$.
It follows from (\ref{eqnLyap}) that ${\mathcal L}
  \sim e^{2 \beta^2 s}$. Now the remarks in the previous paragraph show
  that $X_1 - \beta$ decays more slowly than $e^{(2 \beta^2 +
    \delta)s}$, for any positive $\delta$.
\end{rmk}

\begin{lemma} \label{XbLfin}
The quantity $\frac{X_1 - \beta}{\mathcal L}$ cannot tend to $\infty$ as $s$ tends
 to $-\infty$.
\end{lemma}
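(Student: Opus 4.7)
The plan is to argue by contradiction: suppose $f := (X_1 - \beta)/\mathcal L$ tends to $+\infty$ as $s \to -\infty$, extract an exponential lower bound on $f$ from the ODE $f' = fH + \beta K$ (with $H \to -(1+\beta^2)$ and $K \to \rho$) and an exponential lower bound on $|\mathcal L|$ from (\ref{eqnLyap}), and combine them to force $|X_1 - \beta|$ to blow up, contradicting $X_1 \to \beta$.

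Fix a small $\epsilon > 0$. For $s$ sufficiently negative, the established limits give $H(s) < -(1+\beta^2) + \epsilon$ and $K(s) < \rho + \epsilon$. Since $f \to +\infty$, I may further shrink $s$ so that $f(s) \geq \beta(\rho + \epsilon)/\epsilon$, whence $\beta K(s) \leq \epsilon f(s)$. Substituting into the ODE gives $(\ln f)'(s) \leq -(1+\beta^2 - 2\epsilon)$, and integrating backward from some $s_0$ yields $f(s) \geq C_1 \, e^{-(1+\beta^2 - 2\epsilon)\,s}$ for $s \leq s_0$.

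Separately, equation (\ref{eqnLyap}) reads $(\ln|\mathcal L|)' = 2\sum_j X_j^2$. Since this tends to $2\beta^2$, we have $(\ln|\mathcal L|)'(s) < 2\beta^2 + \epsilon$ for $s$ sufficiently negative, which integrates backward to $|\mathcal L(s)| \geq C_2 \, e^{(2\beta^2 + \epsilon)\,s}$. Multiplying the two estimates,
\[
|X_1(s) - \beta| \,=\, f(s)\,|\mathcal L(s)| \,\geq\, C_1 C_2 \, e^{(\beta^2 - 1 + 3\epsilon)\,s}.
\]
The standing hypothesis $d_1 > 1$ gives $\beta^2 = 1/d_1 < 1$, so one can choose $\epsilon < (1 - \beta^2)/3$, making the exponent strictly negative. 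Then the right-hand side tends to $+\infty$ as $s \to -\infty$, contradicting $X_1 \to \beta$.

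The main subtlety is in lining up the exponential rates: the would-be growth rate $1 + \beta^2$ of $f$ as $s \to -\infty$ is pitted against the decay rate $2\beta^2$ of $|\mathcal L|$, and their difference $1 - \beta^2$ is positive precisely when $d_1 > 1$. Thus the very hypothesis that makes the critical point hyperbolic is exactly what makes the contradiction close; there is no slack in the argument.
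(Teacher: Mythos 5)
Your proof is correct, and it takes a genuinely different route from the paper's. The paper never integrates the ODE for $f = (X_1-\beta)/\mathcal{L}$; instead it brings in the second coordinate of the critical point, using equation (\ref{Ybhat}) and L'H\^{o}pital's rule to show that $f \to +\infty$ would force $(Y_1-\hat{\beta})/\mathcal{L} \to +\infty$ as well, and then expanding $\sum_j X_j^2 + Y_1^2 - 1$ to second order about $(\beta, \hat{\beta})$ to conclude that the ratio defining $\rho$ would itself blow up, contradicting the finiteness of $\rho$ established just before the lemma (the dominance of the linear terms in that expansion is supplied by the decay rates of Remark \ref{asymp}). You stay entirely with $X_1$ and $\mathcal{L}$: a backward Gronwall-type estimate on $f' = Hf + \beta K$ gives $f(s) \geq C_1 e^{-(1+\beta^2-2\epsilon)s}$, equation (\ref{eqnLyap}) gives $|\mathcal{L}(s)| \geq C_2 e^{(2\beta^2+\epsilon)s}$, and the product forces $|X_1-\beta| \to \infty$, which is absurd. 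Your inputs --- the displayed ODE for $f$, the finiteness of $\rho$ (which rests on Proposition \ref{gi} via (\ref{cons2})), and $\sum_j X_j^2 \to \beta^2$ --- are all in place before the lemma, so there is no circularity, and your integrations run in the right direction: a negative upper bound on $(\ln f)'$ does yield a lower bound on $f$ going backward in $s$, and likewise the upper bound on $(\ln|\mathcal{L}|)'$ yields a lower bound on $|\mathcal{L}|$. What your route buys is self-containedness (no appeal to the $Y_1$ equation or to L'H\^{o}pital) and an explicit display of where $d_1 > 1$ enters, namely the rate competition $1+\beta^2$ versus $2\beta^2$ with gap $1-\beta^2$; what the paper's route buys is economy, since (\ref{Ybhat}) and the finiteness of $(Y_1-\hat{\beta})/\mathcal{L}$ are needed again anyway in the $C^3$ analysis. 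Two small polish points: state explicitly that the bounds on $H$, on $K$, and the largeness of $f$ hold on a common ray $(-\infty, s_0]$ (immediate, but worth a word), and note that ``no slack'' is a slight overstatement --- for integer $d_1 \geq 2$ one has $\beta^2 \leq 1/2$, so the gap is at least $1/2$ --- though your structural point that $d_1 = 1$ would collapse the rate comparison is correct.
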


\begin{proof}
Observe that
\begin{equation} \label{Ybhat}
\frac{(Y_1 - \hat{\beta})^{\prime}}{{\mathcal L}^{\prime}} = \frac{Y_1}{2 \sum_{j=1}^{r} X_j^2}
\left( \frac{\sum_{j=2}^{r} X_j^2}{\mathcal L} + \frac{X_1 (X_1 - \beta)}{\mathcal L} \right).
\end{equation}
Now the term outside the bracket tends to a nonzero finite limit,
and the first term in the bracket tends to zero. So if
$\frac{X_1 - \beta}{\mathcal L}$ tends to $+\infty$, so does
$\frac{(Y_1 - \hat{\beta})^{\prime}}{{\mathcal L}^{\prime}}$, and hence,
by L'H\^{o}pital's rule, so does $\frac{Y_1 - \hat{\beta}}{\mathcal L}$.
But
\[
\frac{\sum_{j=1}^{r} X_j^2 + Y_1^2 - 1}{\mathcal L} =
\frac{ 2 \beta (X_1 - \beta) + 2 \hat{\beta} (Y_1 - \hat{\beta}) +
(X_1 - \beta)^2 + (Y_1 - \hat{\beta})^2  + \sum_{j=2}^{r} X_j^2}{\mathcal L}
\]
The discussion in Remark \ref{asymp} shows
 the first two terms are the dominant ones, so we deduce the expression
on the left-hand-side tends to $+\infty$, which we saw above is false.
\end{proof}

We now have
\begin{prop} \label{XbLlimit}
\[
\lim_{s \rightarrow -\infty} \left( \frac{X_1 - \beta}{\mathcal L} \right) =
\frac{\beta \rho}{1 + \beta^2}.
\]
\end{prop}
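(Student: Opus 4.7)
The result is an immediate consequence of Lemma \ref{FHKlemma} together with the preceding lemmas, and the plan is simply to assemble these ingredients. I set $F = \frac{X_1 - \beta}{\mathcal L}$ and view the displayed ODE
\[
\left(\frac{X_1 - \beta}{\mathcal L} \right)^{\prime} = \left(\frac{X_1 - \beta}{\mathcal L}
\right) \left( -1 - \sum_{j=1}^{r} X_j^2 \right) + \beta \left(
\frac{ \sum_{j=1}^{r} X_j^2 + Y_1^2 -1 }{\mathcal L} \right)
\]
as an equation of the form $F^{\prime} = HF + K$, with $H$ and $K$ the coefficients on the right.

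The first step is to identify the asymptotic values $h$ and $k$ as $s \to -\infty$. Since $X_1 \to \beta$, $Y_1 \to \hat\beta$ and $X_j, Y_j \to 0$ for $j > 1$, we have $H \to h = -1 - \beta^2 < 0$. For $K$, the definition of $\rho$ (immediately before Lemma \ref{XbLfin}) gives $K \to k = \beta \rho$, and $\rho > 1$, so $k \neq 0$. Thus Lemma \ref{FHKlemma} applies and forces $F$ to have one of three behaviours: converge to $-k/h = \frac{\beta \rho}{1 + \beta^2}$, tend monotonically to $+\infty$, or tend monotonically to $-\infty$.

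The second step is to eliminate the two divergent alternatives. Lemma \ref{X1upper} gives $X_1 - \beta < 0$, and since $\mathcal L < 0$ on our trajectory, $F$ is strictly positive; this rules out $F \to -\infty$. Divergence to $+\infty$ is precisely what Lemma \ref{XbLfin} rules out. The only remaining option is $F \to \frac{\beta \rho}{1 + \beta^2}$, which is the claim.

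There is no genuine obstacle here: all the work has already been done in Lemmas \ref{X1upper}, \ref{FHKlemma}, and \ref{XbLfin}, and in the identification of $\rho$ as a finite limit greater than $1$. The only thing to be careful about is making sure the hypotheses of Lemma \ref{FHKlemma} are verified cleanly — in particular that $K$ genuinely converges (not merely is bounded), which is exactly the content of the definition of $\rho$ given just before Lemma \ref{XbLfin}, and that $h < 0$ and $k \neq 0$, which are immediate.
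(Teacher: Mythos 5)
Your proposal is correct and follows exactly the paper's route: the paper likewise treats the displayed ODE as an instance of Lemma \ref{FHKlemma} with $h = -1-\beta^2$ and $k = \beta\rho$, rules out the limit $-\infty$ by the positivity of $\frac{X_1-\beta}{\mathcal L}$ coming from Lemma \ref{X1upper} and ${\mathcal L}<0$, and rules out $+\infty$ by Lemma \ref{XbLfin}, leaving only the limit $\frac{\beta\rho}{1+\beta^2}$. Your added care in checking that $K$ genuinely converges (via the definition of $\rho$) and that $k \neq 0$ is exactly the verification the paper performs in the paragraph defining $\rho$.
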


\begin{prop} \label{Lasymp}
As $s$ tends to $-\infty$, $e^{-2 \beta^2 s} {\mathcal L}$ tends to a finite
negative limit.
\end{prop}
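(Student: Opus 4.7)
The plan is to set $f(s) := e^{-2\beta^2 s}\,\mathcal{L}(s)$ and show that $f$ converges to a finite nonzero limit as $s \to -\infty$. Since $\mathcal{L} < 0$ throughout the trajectory, $f$ has constant sign, so it suffices to establish that $\log|f|$ has a finite limit. Using (\ref{eqnLyap}) I compute
\[
\frac{f'(s)}{f(s)} \;=\; \frac{\mathcal{L}'}{\mathcal{L}} - 2\beta^2 \;=\; 2\left(\sum_{j=1}^{r} X_j^2 - \beta^2\right) \;=:\; 2A(s),
\]
so the whole matter reduces to proving that the improper integral $\int_{-\infty}^{s_0} A(\tau)\,d\tau$ converges.

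The first step is the pointwise estimate $A(s) = O(\mathcal{L}(s))$ as $s \to -\infty$. Writing
\[
A \;=\; (X_1 - \beta)(X_1+\beta) + \sum_{j>1} X_j^2,
\]
the first summand is $O(\mathcal{L})$ because Proposition \ref{XbLlimit} gives $X_1 - \beta = O(\mathcal{L})$ while $X_1+\beta$ stays bounded. For $j > 1$, Lemma \ref{XYbounded} bounds $X_j/Y_j^2$, and the conservation law (\ref{cons2}) together with Proposition \ref{gi} (which keeps $g_j$ bounded away from zero as $s \to -\infty$) gives $Y_j^2 = O(\mathcal{L})$; hence $X_j = O(\mathcal{L})$ and $X_j^2 = O(\mathcal{L}^2) = o(\mathcal{L})$. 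Summing, $A(s) = O(\mathcal{L}(s))$.

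The second step is the exponential bound $|\mathcal{L}(s)| \leq C\,e^{\beta^2 s}$ for all sufficiently negative $s$. Since $X_1 \to \beta$ and $X_j \to 0$ for $j > 1$, we have $\sum_j X_j^2 > \beta^2/2$ once $s$ is sufficiently negative, so (\ref{eqnLyap}) gives $(\log|\mathcal{L}|)' > \beta^2$ in that range; integrating from $s$ up to a fixed $s_0$ produces the claimed bound. Combining the two estimates yields $\int_{-\infty}^{s_0} |A(\tau)|\,d\tau < \infty$, hence $\log|f(s)| = \log|f(s_0)| - 2\int_{s}^{s_0} A(\tau)\,d\tau$ tends to a finite limit, so $|f|$ tends to a finite positive limit, and $f$ itself, being negative, tends to a finite negative limit. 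The main obstacle is the pointwise estimate $A = O(\mathcal{L})$, but given Proposition \ref{XbLlimit}, which did the real work of controlling how $X_1$ approaches $\beta$, this amounts to careful bookkeeping with the already-established asymptotics.
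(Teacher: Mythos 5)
Your proof is correct and takes essentially the same route as the paper: the paper likewise integrates $F' = 2F\bigl(\sum_j X_j^2 - \beta^2\bigr)$ over $(-\infty, s^*]$ and verifies integrability of the coefficient by exactly your decomposition, bounding $X_1 - \beta$ (via its finite ratio with $\mathcal{L}$ from Prop.~\ref{XbLlimit} and Lemma \ref{XbLfin}) and $X_j$, $j>1$, and $\mathcal{L}$ by the exponential decay rates of Remark \ref{asymp}. Your packaging via convergence of $\log|f|$ slightly streamlines the paper's two-step argument (monotonicity of $F$ from Lemma \ref{X1upper} plus a lower bound), and your cruder bound $|\mathcal{L}| \leq C e^{\beta^2 s}$, combined with the pointwise estimate $A = O(\mathcal{L})$, serves just as well as the paper's sharper rate $e^{2(\beta^2-\delta)s}$.
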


\begin{proof}
Letting $F = e^{-2 \beta^2 s} {\mathcal L}$, (so that $F$ is negative),
 we see that
\[
F^{\prime} = 2F ( (\sum_{j=1}^{r} X_j^2) - \beta^2 ).
\]
As in the proof of Lemma \ref{XbLfin}, we see that the  dominant term in
the bracket is $2 \beta (X_1 - \beta),$ which is negative by Lemma \ref{X1upper}.
So $F$ is monotonic increasing for large negative $s$.

The proposition is proved if we can show that $F$ is bounded below near
$s=-\infty$. This follows by estimating the integral of the right-hand side
of the above equation over an interval $(-\infty, s^*]$ on which we have
bounds of the form
$$ |X_1 - \beta| \leq A\exp(2(\beta^2 -\delta)s), \ \ \ \ \ \  |X_j(s)| \leq C_j \exp(2(\beta^2 -\delta)s)$$
$$ |X_1 + \beta| \leq 2\beta + \delta, \ \ \ \ \ \
      |{\mathcal L}| \leq B \exp(2(\beta^2 - \delta)s)$$
where $\delta, A, B, C_j, 2 \leq j \leq r$ are appropriate positive constants
(cf Remark \ref{asymp}).
\end{proof}

We can now check the second derivatives of $g_i$ at $t=0$. Recall that
from (\ref{gidotdot}) we have
\[
\ddot{g}_i = \frac{\lambda_i}{g_i Y_i^2} (X_i^2 + Y_i^2 - \sqrt{d_i} X_i).
\]
It is now clear from Prop \ref{XYlimit}
and Prop \ref{gi} that for $i >1$, $\ddot{g_i}$ tends
to a finite limit as $t$ tends to zero.

If $i=1$, we rewrite this expression, using (\ref{def-Yi})-(\ref{cons2}), as
\[
\ddot{g_1} = \frac{\lambda_1}{Y_1} \left(\frac{1 - \sqrt{d_1}X_1}{g_1 Y_1}\right)
+ g_1 \left( \frac{C}{d_1} - \sum_{j=2}^{r}
\frac{d_j \lambda_j}{d_1} \frac{1}{g_j^2} \left(\frac{X_j^2}{Y_j^2} + 1\right) \right).
\]
The quantities in the bracket after $g_1$ tend to finite limits, so
$g_1$ times that bracket tends to zero.
As $Y_1$ tends to $\hat{\beta} \neq 0$, we see from
Prop \ref{XbLlimit} and (\ref{cons2}) that the first term also tends to zero.

We have therefore shown the metric is $C^2$.

Similarly we can study the potential $u$. From the relation (\ref{def-u})
we obtain
\begin{equation} \label{u1}
\dot{u} = \sqrt{\frac{C}{\mathcal L}} \left(\left(\sum_{j=1}^{r} \sqrt{d_j} X_j\right) -1 \right)
= \sqrt{d_1}\left(\frac{ (X_1 - \beta)}{\sqrt{{\mathcal L}/C}}\right)
+ \sum_{i=2}^{r} \sqrt{d_i}\left(\frac{X_i}{Y_i^2}\right) \left(\frac{Y_i^2}{\sqrt{{\mathcal L}/C}}\right).
\end{equation}
By Prop \ref{XYlimit}, Remark \ref{asymp}, and Prop \ref{XbLlimit}, it follows
that $\dot{u}$ tends to zero as $t$ tends to $0$. Next, by integrating (\ref{def-u})
we get
\begin{equation} \label{u0}
e^{u(0)} = ({\rm positive \; constant })
\prod_{i=2}^{r} \, g_i(0)^{d_i} \left(\lim_{s \rightarrow
-\infty} e^{-2 \beta^2 s} \frac{\mathcal L}{C}\right)^{\frac{d_1}{2}},
\end{equation}
which is finite by Prop \ref{Lasymp}. If we differentiate (\ref{def-u}) and
use (\ref{eqnLyap}) we get
\begin{equation} \label{u2}
\ddot{u} = \sum_{i=1}^{r} \, d_i \frac{\ddot{g_i}}{g_i}
= \sum_{i=1}^{r} \frac{\lambda_i d_i}{g_i^2 Y_i^2} \left(X_i^2 + Y_i^2 -\sqrt{d_i}X_i \right).
\end{equation}
The right-hand side tends to a finite limit (as $t$ tends to $0$) by
Props \ref{gi}, \ref{XbLlimit}, the relation (\ref{cons2}) and the discussion
after Prop \ref{Lasymp}.

For the third derivatives, we calculate
\[
\frac{d^3 g_i}{dt^3} = \sqrt{\frac{C}{{\mathcal L}}} \frac{\lambda_i}{g_i}
\left( \frac{X_i}{Y_i^2} \left(
 -3X_i + \frac{X_i^2}{\sqrt{d_i}} + \sqrt{d_i} + \sqrt{d_i}
\sum_{j=1}^{r} X_j^2 \right)
+ \frac{X_i}{\sqrt{d_i}} -1 \right).
\]
If $i > 1$, we know that the terms in $\frac{X_i}{\sqrt{\mathcal L}},
\frac{X_i^2}{Y_i^2 \sqrt{\mathcal L}},
\frac{X_i^3}{Y_i^2 \sqrt{\mathcal L}}$ tend to zero (cf Remark \ref{asymp}).
Our task thus reduces to showing that
\[
\frac{1}{\sqrt{\mathcal L}} \left(\frac{X_i}{Y_i^2} (1 + \sum_{j=1}^{r} X_j^2 )
 -\frac{1}{\sqrt{d_i}} \right)
\]
tends to zero.

Now, $1 + \sum_{j=1}^{r} X_j^2 = 1 + \beta^2$ modulo terms approaching
zero at least as fast as $\mathcal L$, so we just have to check that
\[
\lim_{s \rightarrow -\infty}
  \frac{1}{\sqrt{\mathcal L}} \left(\frac{X_i}{Y_i^2} -
  \frac{1}{\sqrt{d_i}(1 + \beta^2)}\right) =0.
\]
In fact we shall show the stronger statement that
\[
Q_i := \frac{1}{\mathcal L} \left( \frac{X_i}{Y_i^2}
- \frac{1}{\sqrt{d_i}(1 + \beta^2)}\right)
\]
tends to a finite limit.
We find that $Q_i$ satisfies the equation
$$ Q_i^{\prime} = -\left(1+ 3 \sum_{j=1}^r X_j^2 \right) Q_i
  + \frac{1}{(\sqrt{d_i}(1+ \beta^2))\mathcal L} \left(2(1+\beta^2) \frac{X_i^2}{Y_i^2}
    + \beta^2 - \sum_{j=1}^r X_j^2 \right). $$
By arguments similar to those above, one sees that the second term on
the right-hand side tends to a finite negative limit
as $s \rightarrow -\infty$ since $X_1 < \beta$ and ${\mathcal L} < 0.$
So the hypotheses of Lemma \ref{FHKlemma} are satisfied, and $Q_i$ either
tends to a finite limit or to $+\infty$ or $-\infty$.  But
\[
\frac{\left(\frac{X_i}{Y_i^2} - \frac{1}{\sqrt{d_i}(1 + \beta^2)}\right)^{\prime}}
{{\mathcal L}^{\prime}}
=
\frac{\left(\frac{X_i}{Y_i^2} - \frac{1}{\sqrt{d_i}(1 + \beta^2)}\right)}
{\mathcal L} \left( \frac{-(1 + \beta^2)}{2 \sum_{j=1}^{r} X_j^2} \right)
+ R_i
\]
where $R_i$ tends to a finite limit. So
if the limit of $Q_i$ is infinite, L'H\^{o}pital's rule gives a
contradiction, as the term in the final bracket is negative.

For $i=1$, by (\ref{def-tgi}) and $Y_1(0)=\hat{\beta},$ it is enough to check that
\[
\frac{ -3 (\frac{X_1}{Y_1})^2 + \beta \frac{X_1^3}{Y_1^2} + \beta X_1 -1
+  \frac{X_1}{\beta Y_1^2} (1 + \sum_{j=1}^{r} X_j^2)  }
{\mathcal L}
\]
has a finite limit.
Recall that $1 + \sum_{j=1}^{r} X_j^2 =1 + \beta^2$ modulo terms
which have a finite limit when divided by $\mathcal L$. Similarly
$X_1^k = \beta^k$ modulo such terms. So we are left with checking
that the limit of $\frac{Y_1^2 - \hat{\beta}^2}{\mathcal L}$ is finite, which follows
on applying L'H\^{o}pital's rule, equation (\ref{Ybhat}) and Prop. \ref{XbLlimit}.

\medskip

We have shown that the metric is $C^3$ and so by the discussion on regularity
near the beginning of \S 3, the soliton is smooth.

\begin{thm} \label{mainthm}
Let $M_2, \ldots, M_r$ be compact Einstein manifolds with positive scalar
curvature. For $d_1 > 1$ there is an $r-1$ parameter family of complete smooth
steady Ricci solitons on the trivial  rank $d_1 + 1$ vector bundle
over $M_2 \times \ldots \times M_r$.  \qed
\end{thm}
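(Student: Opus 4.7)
The plan is to realize each soliton as a warped product of the form (\ref{metric}) with $M_1 = S^{d_1}$ (and normalisation $\lambda_1 = d_1-1$) collapsing smoothly to $M_2 \times \cdots \times M_r$ at $t=0$, constructed from the polynomial dynamical system (\ref{eqnX})--(\ref{eqnY}) in the variables $X_i, Y_i$. The smooth-collapse conditions (\ref{bdy0})--(\ref{bdy2}) translate, via (\ref{def-Xi})--(\ref{def-Yi}), into the requirement that the trajectory originate at $s \to -\infty$ from the hyperbolic critical point singled out in \S 3, namely $X_1 \to \beta = 1/\sqrt{d_1}$, $Y_1 \to \hat\beta$, and $X_i, Y_i \to 0$ for $i>1$.

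Next I would extract the parameter count. Linearisation at that critical point (assuming $d_1>1$) has $r-1$ positive eigenvalues equal to $\beta^2$, one positive eigenvalue $2\beta^2$, and $r$ negative eigenvalues $\beta^2 - 1$, so the unstable manifold has real dimension $r$. The B\"ohm family of Ricci-flat metrics (Remark \ref{Bohm}) fills an $(r-2)$-dimensional stratum of this unstable manifold lying in $\mathcal{L}=0$; the unstable manifold meets that sphere transversally, so the trajectories with $Y_i>0$ that leave into $\mathcal L <0$ form an $(r-1)$-parameter family after quotienting by reparameterisation of $s$. The Lyapunov monotonicity (\ref{eqnLyap}) then traps each such trajectory in the bounded region $\mathcal L<0$, so the flow persists for all $s \in \R$; the completeness lemma of \S 3 handles $s \to +\infty$ and Theorem \ref{asymptotics} controls the asymptotic geometry there.

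The main work, and the principal obstacle, is verifying smoothness at the collapsing end $s \to -\infty$. Here I would carry out the asymptotic program already assembled in \S 4: Lemma \ref{XYbounded} and Proposition \ref{XYlimit} control $X_i/Y_i^2$ for $i>1$; Proposition \ref{gi} gives $0 < g_i(0) < \infty$; Lemma \ref{X1upper}, Lemma \ref{XbLfin}, Proposition \ref{XbLlimit}, and Proposition \ref{Lasymp} supply the finer decay rates needed to cancel the singular-looking terms in the $i=1$ slot. Substituting these limits into (\ref{gidotdot}) and (\ref{u1})--(\ref{u2}) verifies (\ref{bdy0})--(\ref{bdyu}) and the finiteness of $\ddot u, \ddot g_i$ at $t=0$, giving $C^2$. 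A more delicate step, using the auxiliary quantity $Q_i$ which satisfies a differential equation of the form (\ref{eqnFHK}) and is handled by Lemma \ref{FHKlemma} together with L'H\^opital's rule, promotes this to $C^3$.

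Once $C^3$ regularity is in hand, the elliptic bootstrap outlined at the start of \S 3 (via the equation $\Delta u = R$, Lemma 6.16 of \cite{GT}, the Bochner identity for $du$, and Morrey's theorem) upgrades the solution to real-analytic. By construction the underlying manifold is the total space of the trivial rank $d_1+1$ vector bundle over $M_2 \times \cdots \times M_r$, and the parameter count above yields the claimed $(r-1)$-dimensional family.
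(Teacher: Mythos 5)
Your proposal is correct and follows essentially the same route as the paper: the theorem is the cumulative result of \S 3 and \S 4, and you reproduce the paper's own chain of reasoning --- the $(r-1)$-parameter family of trajectories in the unstable manifold of the hyperbolic critical point flowing into $\mathcal{L}<0$, global existence and completeness via the Lyapunov function, the asymptotic estimates (Lemmas \ref{XYbounded}, \ref{X1upper}, \ref{XbLfin}, Propositions \ref{XYlimit}, \ref{gi}, \ref{XbLlimit}, \ref{Lasymp} and the $Q_i$ analysis via Lemma \ref{FHKlemma}) giving $C^3$ regularity at the collapsing end, and the elliptic bootstrap to real analyticity. Your parameter count (an $r$-dimensional unstable manifold, reduced by one for reparametrisation of $s$, with the B\"ohm Ricci-flat trajectories forming the transversal $\mathcal{L}=0$ stratum) matches the paper's accounting exactly.
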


The examples of Ivey and Bryant have nonnegative Ricci curvature. This is
also true for our more general examples.

\begin{prop} \label{ricci}
The soliton metrics have nonnegative Ricci curvature.
\end{prop}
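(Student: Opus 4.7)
My plan is to reduce the nonnegativity of the Ricci tensor to two scalar inequalities along the trajectory, and then to establish each of them via a first-order linear ODE with the right sign structure. Since $\epsilon=0$, the soliton equation (\ref{gradRS}) reads $\ric=-\mathrm{Hess}(u)$; for our warped product (\ref{metric}) with $u=u(t)$, a short calculation shows that $\mathrm{Hess}(u)$ is diagonal with eigenvalues $\ddot u$ on $\partial_t$ and $\dot u\,\dot g_i/g_i$ on each factor $M_i$. Since Lemma \ref{XYbounded} and (\ref{def-tgi}) yield $\dot g_i=\sqrt{\lambda_i}\,X_i/Y_i>0$, the claim reduces to proving $\dot u\le 0$ and $\ddot u\le 0$ throughout the interior.

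Next I express these inequalities in the $(X,Y,\mathcal L)$ variables. Combining (\ref{u1}), (\ref{u2}) with (\ref{cons2}) (which gives $\lambda_i d_i/(g_i^2 Y_i^2)=C/\mathcal L$) yields
\[
\dot u=\sqrt{\tfrac{C}{\mathcal L}}\,(\mathcal H-1),\qquad
\ddot u=\tfrac{C}{\mathcal L}\,(\mathcal L+1-\mathcal H),
\]
where $\mathcal H:=\sum_{i=1}^r\sqrt{d_i}\,X_i$. Because $C<0$ and $\mathcal L<0$ on our trajectory, both prefactors are positive, so the two desired inequalities are equivalent to
\[
\Psi:=1-\mathcal H\,\ge\,0\qquad\text{and}\qquad
\Phi:=\mathcal H-1-\mathcal L\,\ge\,0.
\]

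The crucial computation is to differentiate $\mathcal H$ using (\ref{eqnX}) and invoke (\ref{eqnLyap}) together with the identity $1+\mathcal L=\sum_j(X_j^2+Y_j^2)$, which after simplification gives the linear inhomogeneous ODEs
\[
\Psi'=(\textstyle\sum_j X_j^2-1)\,\Psi+|\mathcal L|,\qquad
\Phi'=(\textstyle\sum_j X_j^2-1)\,\Phi+|\mathcal L|\textstyle\sum_j X_j^2.
\]
On our trajectory the linear coefficient $\sum_j X_j^2-1$ is strictly negative (as $\mathcal L<0$ forces $\sum X_j^2<1$), while the inhomogeneous terms are strictly positive for all finite $s$. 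In particular, $\Psi'>0$ at any point where $\Psi\le 0$, and likewise for $\Phi$.

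To conclude, note that $\mathcal H\to 1$ and $\mathcal L\to 0$ at the critical point, so $\Psi,\Phi\to 0$ as $s\to-\infty$. A standard maximum-principle argument then forces $\Psi,\Phi>0$: supposing $\Psi(s_0)\le 0$, either $\Psi>0$ at some $s'<s_0$, in which case the first subsequent zero $s_1\in(s',s_0]$ must have $\Psi'(s_1)\le 0$ from the left, contradicting $\Psi'(s_1)=|\mathcal L(s_1)|>0$; or else $\Psi\le 0$ on all of $(-\infty,s_0]$, whence $\Psi'>0$ throughout makes $\Psi$ strictly increasing there, and strict monotonicity combined with $\Psi(-\infty)=0$ forces $\Psi>0$ in the interior, again a contradiction. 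The same reasoning handles $\Phi$. The main conceptual input is the guess of the right pair $(\Psi,\Phi)$ so that both ODEs have a negative linear coefficient and a nonnegative source: once this is in hand, nonnegativity of $-\mathrm{Hess}(u)$, hence of $\ric$, is immediate.
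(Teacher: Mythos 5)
Your proposal is correct and is essentially the paper's own proof: up to an overall sign, your $\Psi$ and $\Phi$ are exactly the paper's quantities $\mathcal{H}-1$ and $\mathcal{L}+1-\mathcal{H}$, you derive the identical linear ODEs for them, and you close with the same comparison argument against the limits $\mathcal{H}\to 1$, $\mathcal{L}\to 0$ as $s\to-\infty$. The reduction to $\dot{u}\le 0$, $\ddot{u}\le 0$ via the diagonal form of ${\rm Hess}(u)$ and positivity of $\dot{g_i}/g_i$ also matches the paper.
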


\begin{proof}
It is enough to show that the Ricci curvature is positive on the complement
of the submanifold at $t=0$, i.e., on the finite part of the trajectory.
The Hessian of $u$ is given by $\ddot{u}$ evaluated on directions normal
to the hypersurface and by $\frac{\dot{u} \dot{g_i}}{g_i}$ evaluated on
directions tangent to $M_i$. From the soliton equation (\ref{gradRS})
(with $\epsilon = 0$), the formulae (\ref{u0}), (\ref{u1}), (\ref{u2}),
(\ref{cons2}) above for $\dot{u}, \ddot{u}$ and the fact, proved in Prop (\ref{gi}),
that $\frac{\dot{g}_{i}}{g_i}$ is positive, it is enough to check that
\[
{\mathcal H}= \sum_{i=1}^{r} \sqrt{d_i} X_i,
\]
introduced in Remark \ref{einstein}, satisfies ${\mathcal H} < 1$ and
${\mathcal L} + 1 - {\mathcal H} < 0$ for all $s$.

Now it is easy to check that we have equations
\[
({\mathcal H}-1)^{\prime} = ({\mathcal H}-1) ( \sum_{j=1}^{r} X_j^2  -1 )
 + \mathcal L
\]
and
\[
({\mathcal L} + 1 - {\mathcal H})^{\prime} =
({\mathcal L} + 1 - {\mathcal H}) (\sum_{j=1}^{r}
X_j^2 -1 ) + {\mathcal L} (\sum_{j=1}^{r} X_j^2).
 \]
 Moreover $\sum_{j=1} X_j^2 - 1$ and $\mathcal L$ are negative.  So if
 ${\mathcal H} \geq 1$ at $s_0$ then ${\mathcal H}^{\prime} <0$ and
 ${\mathcal H} > 1$ on $(-\infty, s_0)$, contradicting the fact that
 $\mathcal H$ tends to $1$ as $s$ tends to $-\infty$. Similarly if
 ${\mathcal L} +1 -{\mathcal H}$ is non-negative at $s_0$, then we see it
 is positive with negative derivative on $(-\infty, s_0)$,  contradicting
 the fact that ${\mathcal L} + 1-{\mathcal H}$ tends to zero.
\end{proof}

\begin{rmk} \label{sectcurv}
For a multiply-warped product $I \times M_1 \times \cdots \times M_r$ with
metric of the form (\ref{metric}) it is easy to compute the sectional curvatures,
e.g., by considering it as a Riemannian submersion over $I$. If $U, V$ are
respectively tangent to  $M_i$ and $M_j$, one has
\begin{eqnarray*}
 K(U \wedge \frac{\partial}{\partial t}) & = & -\frac{\ddot{g}_i}{g_i}  \\
 K(U \wedge V) &=& -\frac{\dot{g}_i \dot{g}_j}{g_i g_j}, \, \, \, \, i \neq j, \\
 K(U \wedge V) &=& \frac{1}{g_i^2} \left(K_{h_i}(U \wedge V) - \dot{g_i}^2  \right), \, \, i=j,
\end{eqnarray*}
where $K_{h_i}$ denotes the sectional curvature of $(M_i, h_i)$.

It now follows from the asymptotics described in Theorem \ref{asymptotics}
that if $r > 1$ there are always $2$-planes with negative sectional curvature.
If $r=1$, we are in the case of the Bryant solitons, which are known to have positive
curvature ([Bry] or [Cetc Lemma 1.37]). The above formulas also show
that the sectional curvatures decay like $t^{-1}$ as $t$ tends
to $+\infty$. Recall that the {\em asymptotic scalar curvature ratio} of a
complete, non-compact Riemannian manifold is defined as
$\limsup_{d \rightarrow +\infty}  R d^2$, where $R$ is the scalar curvature
and $d$ is the distance from a fixed origin in the manifold. Since $t$ is the
geodesic distance in our examples, it follows that their asymptotic scalar
curvature ratios are all $+\infty$.
\end{rmk}


\begin{thebibliography}{bbbbb}

\bibitem[ACGT]{ACGT} V. Apostolov, D. Calderbank, P. Gauduchon and C. T{\o}nnesen-Friedman,
     {\em Hamiltonian $2$-forms in K\"ahler Geometry IV: Weakly Bochner-Flat
     K\"ahler Manifolds}, to appear in Comm. Anal. Geom., arXiv:math DG/0511119.
\bibitem[Bo]{Bo} C. B\"ohm, {\em Non-compact Cohomogeneity One Einstein Manifolds,}
      Bull. Soc. Math. France, {\bf 122}, (1999), 135-177.
\bibitem[Bry]{Bry} R. Bryant, unpublished work.
\bibitem[Ca]{Ca} H. D. Cao, {\em Existence of Gradient Ricci Solitons},
     Elliptic and Parabolic Methods in Geometry, A. K. Peters,
     (1996), 1-16.
\bibitem[CaZ]{CaZ} H. D. Cao and Xiping Zhu, {\em A Complete Proof of the Poincar{\'e}
      and Geometrization Conjectures--Application of the Hamilton-Perelman
      Theory of the Ricci Flow}, Asian J. Math., {\bf 10}, (2006), 165-492.
\bibitem[ChV]{ChV} T. Chave and G. Valent, {\em On a Class of Compact and
      Non-compact Quasi-Einstein Metrics and Their Renormalizability Properties,}
      Nuclear Phys. {\bf B478}, (1996), 758-778.
\bibitem[Cetc]{Cetc} B. Chow, S.C. Chu, D. Glickenstein, C. Guenther,
    J. Isenberg, T. Ivey, D. Knopf, P. Lu, F. Luo, and L. Nei,
    {\em The Ricci flow: Techniques and Applications Part I:Geometric Aspects},
     Mathematical Surveys and Monographs Vol. 135, American Math. Soc. (2007).
\bibitem[CL]{CL} E. A. Coddington and N. Levinson, {\em Theory of Ordinary Differential
     Equations}, Reprint Edition, Krieger Publishing Co. Inc. (1984).
\bibitem[CGLP]{CGLP} M. Cveti{\v c},  G. Gibbons, H. L{\" u} and C. Pope,
     {\em New complete noncompact Spin(7) manifolds}, Nucl. Phys. B
     {\bf 620}, (2002), 29-54.
\bibitem[DW]{DW} A. Dancer and M. Wang, {\em On Ricci Solitons
             of Cohomogeneity One}, arXiv:math.DG/ 08020759.
\bibitem[DTK]{DTK} D. DeTurck and J. Kazdan, {\em Some Regularity Theorems
      in Riemannian Geometry,} Ann. scient. {\' E}c. Norm. Sup.,
      $4^{e}$ s{\'e}rie, t. {\bf 14}, (1981), 249-260.
\bibitem[FIK]{FIK} M. Feldman, T. Ilmanen, and D. Knopf, {\em Rotationally Symmetric
      Shrinking and Expanding Gradient K\"ahler-Ricci Solitons}, J. Diff. Geom.,
       {\bf 65}, (2003), 169-209.
\bibitem[G]{G} Z. D. Guan, {\em Quasi-Einstein Metrics,} Int. J. Math.,
      {\bf 6}, (1995), 371-379.
\bibitem[GK]{GK} A. Gastel and M. Kronz, {\em A family of Expanding Ricci Solitons},
       Variational problems in Riemannian geometry,
       Prog. Nonlinear Differential Equations Appl. {\bf 59}, Birkh\"auser,
       Basel (2004), 81--93.
\bibitem[GT]{GT} D. Gilbarg and N. S. Trudinger, {\em Elliptic Partial Differential
      Equations of Second Order,} Second Edition, Springer-Verlag, (1998).
\bibitem[Ha1]{Ha1} R. S. Hamilton, {\em The Ricci Flow on Surfaces}, in
     Mathematics and General Relativity (Santa Cruz, CA, 1986), Contemp Math.,
     {\bf 71}, Amer. Math. Soc., (1988), 237-262.
\bibitem[Ha2]{Ha2} R. S. Hamilton, {\em Eternal Solutions to the Ricci
     Flow}, Jour. Diff. Geom., {\bf 38} (1993), 1-11.
\bibitem[Iv]{Iv} T. Ivey, {\em New Examples of Complete Ricci Solitons},
       Proc. AMS, {\bf 122}, (1994), 241-245.
\bibitem[Ko]{Ko} N. Koiso, {\em On Rotationally Symmetric Hamilton's Equation
     for K\"ahler-Einstein Metrics}, Adv. Studies Pure Math., {\bf 18-I},
     Academic Press, (1990), 327-337.
\bibitem[La]{La} J. Lauret, {\em Ricci Soliton Homogeneous Nilmanifolds,} Math.
     Ann. {\bf 319}, (2001), 715-733.
\bibitem[PTV]{PTV} H. Pedersen, C. T{\o}nnesen-Friedman, and G. Valent,
     {\em Quasi-Einstein K\"ahler Metrics,} Lett. Math. Phys., {\bf 50},
     (2000), 229-241.
\bibitem[Per]{Per} G. Perelman, {\em The Entropy Formula for the Ricci Flow and
       its Geometric Applications}, arXiv:math.DG/0211159.
\bibitem[Pk]{Pk} L. Perko, {\em Differential Equations and Dynamical Systems},
      Springer Texts in Applied Mathematics, {\bf 7}, Springer-Verlag, (1996).

\bibitem[PS]{PS} F. Podesta and A. Spiro, {\em K\"ahler-Ricci Solitons on
       Homogeneous Toric Bundles I and II}, arXiv:math.DG/0604070, 0604071.
\end{thebibliography}
\end{document}